\theoremstyle{thmstyleone}%
\newtheorem{theorem}{Theorem}%  meant for continuous numbers
\newtheorem{proposition}[theorem]{Proposition}% 
\theoremstyle{thmstyletwo}%
\newtheorem{remark}{Remark}%
\theoremstyle{thmstylethree}%
\newtheorem{lemma}{Lemma}%
\newcommand{\rn}{\mathbb{R}^N}
\newcommand{\R}{\mathbb{R}}
\def\namedlabel#1#2{\begingroup
    #2%
    \def\@currentlabel{#2}%
    \phantomsection\label{#1}\endgroup
}
\begin{document}

\title[Normalised solutions for $p$-Laplacian]{Normalised solutions for $p$-Laplacian equations with $L^p$-supercritical growth}

%%=============================================================%%
%% Prefix	-> \pfx{Dr}
%% GivenName	-> \fnm{Joergen W.}
%% Particle	-> \spfx{van der} -> surname prefix
%% FamilyName	-> \sur{Ploeg}
%% Suffix	-> \sfx{IV}
%% NatureName	-> \tanm{Poet Laureate} -> Title after name
%% Degrees	-> \dgr{MSc, PhD}
%% \author*[1,2]{\pfx{Dr} \fnm{Joergen W.} \spfx{van der} \sur{Ploeg} \sfx{IV} \tanm{Poet Laureate} 
%%                 \dgr{MSc, PhD}}\email{iauthor@gmail.com}
%%=============================================================%%

\author*[1]{\fnm{Raj Narayan} \sur{DHARA}}\email{raj.dhara@unipd.it,~raj.narayan.dhara@p.lodz.pl\\ORCiD:  0000-0001-6749-1500}
\author[2]{\fnm{Matteo} \sur{Rizzi}}\email{mrizzi1988@gmail.com,~matteo.rizzi@uniba.it\\ORCiD:0000-0001-6294-753X}
% \equalcont{These authors contributed equally to this work.}

% \author[1,2]{\fnm{Third} \sur{Author}}\email{iiiauthor@gmail.com}
% \equalcont{These authors contributed equally to this work.}

\affil*[1]{\orgdiv{Institute of Mathematics}, \orgname{Lodz University of Technology}, \orgaddress{\street{\.Z{}eromskiego 116},
 \city{\L\'od\'z}, \postcode{90-924},  \country{Poland}}}

\affil[2]{\orgdiv{Dipartimento di Matematica}, \orgname{Universit\`a degli studi di Bari Aldo Moro}, \orgaddress{\street{Via Edoardo Orabona, 4},
 \city{Bari}, \postcode{70125},  \country{Italy}}}

%%==================================%%
%% sample for unstructured abstract %%
%%==================================%%

\abstract{For $N\ge 3$ and $2<p<N$, we find normalised solutions to the equation
\begin{align*}%\label{eq:func:1}
-\Delta_p u+(1+V(x))|u|^{p-2}u+\lambda u&=|u|^{q-2}u\qquad\text{in $\rn$}\\
\|u\|_2&=\rho
\end{align*}
in the mass supercritical and Sobolev subcritical case, that is $q\in(p\frac{N+2}{N},\frac{Np}{N-p})$, at least if $\rho>0$ is small enough. The function $V\in L^{N/p}(\rn)$, which plays the role of potential, is assumed to be non-positive and vanishing at infinity. Moreover, we will prove the compactness of the embedding of the space of radial functions $W^{1,p}_{rad}(\rn)\subset L^q(\rn)$ for $p\in(1,N)$ and $q\in(p\frac{N+2}{N},\frac{Np}{N-p})$. 
}

%%================================%%
%% Sample for structured abstract %%
%%================================%%

\keywords{$p$-Laplacian, normalised solution, Variational methods, Constrained critical point}

%%\pacs[JEL Classification]{D8, H51}

\pacs[MSC Classification 2010]{Primary 46E35, 35J20, 35J92, Secondary 35B06
}
\pacs[Acknowledgement]{The authors were partially supported by the grant DFG project (Justus Liebig university and 62202684). https://www.dfg.de/en/funded-projects.}

\maketitle

\section{Introduction }\label{intro}
%I like \cite{R2,malchiodi2018periodic}.\\

For $N\ge 3$, $p\in(2,N)$, $q\in(p\frac{N+2}{N},p^*)$ and $r\in[1,p^*)$, with $p^*:=\frac{Np}{N-p}$, we are interested in the problem
\begin{equation}\label{eq:EL:r}
\begin{aligned}
&-\Delta_p u+(1+V(x))|u|^{p-2}u+\lambda
|u|^{r-2}u=|u|^{q-2}u\qquad\text{in}\,\rn ,\\
&\int_{\rn}|u|^r dx=\rho^r,\,u>0,
\end{aligned}
\end{equation}
where $\rho>0$ is a fixed parameter and $V:\rn\to\mathbb{R}$ is a fixed potential such that
\begin{equation}\label{limV=1}
\lim_{|x|\to\infty}V(x)=0.
\end{equation} From the physical point of view, it is particularly relevant to consider the case $r=2$, namely the problem
\begin{equation}\label{eq:EL:1}
\begin{aligned}
&-\Delta_p u+(1+V(x))|u|^{p-2}u+\lambda
u=|u|^{q-2}u\qquad\text{in}\, \rn ,\\
&\int_{\rn}u^2 dx=\rho^2,\,u>0.
\end{aligned}
\end{equation}
This case is particularly meaningful since \eqref{eq:EL:1} is related to the generalized nonlinear Schr\"odinger equation (NLSE)
\begin{align}\label{eq:nlse}
    -i\dfrac{\partial \Psi}{\partial t}=\Delta_p \Psi - (V(x)+1)|\Psi|^{p-2}\Psi + |\Psi|^{q-2}\Psi\quad \text{in}\ \mathbb{R}\times\rn ,
\end{align}
involving the $p$-Laplacian on a wave function $\Psi:\R\times\rn\to \mathbb{C}$. In fact, if we investigate the standing wave solutions of~\eqref{eq:nlse}, i.e., the solutions of the form
\begin{align}\label{eq:ansatz}
    \Psi(t,x)=e^{i\lambda t} u(x),
\end{align}
plugging the ansatz~\eqref{eq:ansatz} into~\eqref{eq:nlse} we obtain that $\Psi$ is a solution to \eqref{eq:nlse} if and only if $u$ is a solution to the first equation in~\eqref{eq:EL:1}.

For $p>2$, the problem~\eqref{eq:EL:1} arises in many fields of mathematical physics, for instance, filtration process of an ideal incompressible fluid through a porous medium, non-Newtonian fluids, pseudo-plastic fluids, nonlinear elasticity, quantum fields, plasma physics, nonlinear optics and reaction diffusion. Moreover, the presence of the external potential in Schr\"odinger equation $V(x)$ modifies the wave function $\Psi(t,x)$.
% , for example, it may change the probability of finding a particle in a specific location at a given time.\\

In view of the ansatz~\eqref{eq:ansatz}, one may prescribe either the frequency $\lambda$ or the $L^2(\rn)$-norm of $u$. Note that, due to the conservation of mass, i.e. the fact that the $\|\Psi(t,\cdot)\|_{L^2(\rn)}$-norm of a standing wave solution is independent
of $t\in\mathbb{R}$, it is particularly interesting to consider solutions $u$ with prescribed $L^2(\rn)$-norm, which are known as \textit{normalised solutions}.\\ %However, in order to put~\eqref{eq:EL:1} in a variational form considering $\lambda$ as Lagrange multiplier,
%it is relevant to search for solutions satisfying the normalization
%constraint $\int_{\rn}u^2 dx=\rho^2$ in~\eqref{eq:EL:1}.

Here we are interested in finding normalised solutions to \eqref{eq:EL:1}, which can be characterised as the critical points of the functional 
\begin{align}\label{eq:func:1}
	J_V(u):=\frac{1}{p}\|u\|^p +\frac{1}{p}\int_{\rn}V(x)|u|^p\, dx -\frac{1}{q}\|u\|_q^q,\qquad\forall\, u\in W^{1,p}(\rn),
\end{align}
constrained to the sphere
$$\mathcal{S}_{\rho}:=\{u\in W^{1,p}(\rn)\cap L^2(\rn):\ \|u\|_{2}=\rho\},$$
where we have set
\[
\|u\|:=\left(\int_{\rn}(|\nabla u|^p +|u|^p )\, dx \right)^{\frac{1}{p}}\qquad\forall\, u\in W^{1,p}(\rn),
\]
and $\|u\|_s:=\left(\int_{\rn}|u|^s dx\right)^{1/s}$ for any $u\in L^s(\rn)$ and $s\in[1,\infty)$. We note that the functional $J_V$ is well defined in $W^{1,p}(\rn)\cap L^2(\rn)$ if, for instance, $V\in L^{\frac{N}{p}}(\rn)$. In fact $W^{1,p}(\rn)\subset L^q(\rn)$ for $q\in[p\frac{N+2}{N},p^*]$ and, due to the Sobolev embedding $W^{1,p}(\rn)\subset L^{p^*}(\rn)$, we have
\begin{equation}
\int_{\rn}|V(x)||u|^p dx\le \|V\|_{\frac{N}{p}}\|u\|^p_{p^*}\qquad\forall\,u\in W^{1,p}(\rn).
\end{equation} 
We consider the $L^p$-supercrtical and Sobolev-subcritical case, that is $q\in(p\frac{N+2}{N},p^*)$. This restriction is meaningful because, in the $L^p$-subcritical case $q\in(p,\frac{N+2}{N}p)$, the functional $J_V$ is coercive on the sphere $\mathcal{S}_{\rho}$, at least if $\|V\|_{\frac{N}{p}}$
%$\footnote{$\|V\|_{\frac{N}{p}}$} 
is small enough, due to the Gagliardo-Nirenberg inequality
\begin{align}\label{eq:GN:1}
\|u\|_q\le C\|\nabla u\|_{p}^{\theta}\|u\|_2^{1-\theta},\quad \theta = \frac{Np(q-2)}{q(p(N+2)-2N)},\qquad\forall\, u\in W^{1,p}(\rn).
\end{align}
On the other hand, for $q\in(p\frac{N+2}{N},p^*)$, the functional $J_V$ is unbounded from below on $\mathcal{S}_{\rho}$, as a consequence there is no chance to find a solution to \eqref{eq:EL:1} by minimising $J_V$ on $\mathcal{S}_\rho$. We will see that this problem can be overcome by applying a mountain-pass strategy.\\

We will see that Problem (\ref{eq:EL:1}) is strictly related to the limit problem
\begin{equation}\label{limit-prob}
\begin{aligned}
&-\Delta_p u+|u|^{p-2}u+\lambda
u=|u|^{q-2}u\qquad\text{in}\,\rn\\
&\int_{\rn}u^2 dx=\rho^2,\,u>0,
\end{aligned}    
\end{equation}
which is treated in \cite{WLZL}, where the authors prove the existence of solutions $(\lambda,u)\in(0,\infty)\times(W^{1,p}(\rn)\cap L^2(\rn))$ for $\rho>0$ small enough. Their proof is based on a mountain-pass argument as well. More precisely, setting, for any $u\in W^{1,p}(\rn)\cap L^2(\rn)$,
$$I(u):=\frac{1}{p}\|u\|^p-\frac{1}{q}\|u\|_q^q,$$
they prove the following result.
\begin{theorem}[Theorem 1.1~\cite{WLZL}]\label{thWLZL}
Let $N\ge 3$, $2<p<N$ and $p\frac{N+2}{N}<q<p^*$. Then there exists $\rho_0>0$ such that, for any $\rho\in(0,\rho_0)$, there exists a constant $K_\rho>0$ and a solution $(\lambda_\rho,u_\rho)\in(0,\infty)\times ((W^{1,p}(\rn)\cap L^2(\rn))$ to problem (\ref{limit-prob}) such that $I(u_\rho)=c_\rho$, where
\begin{equation}\notag
\begin{aligned}
c_\rho&:=\inf_{\gamma\in\mathcal{G}_\rho}\max_{t\in[0,1]}I(g(t))>0,\\
\mathcal{G}_\rho&:=\{g\in C([0,1],\mathcal{S}_\rho):\,\|g(0)\|_{W^{1,p}(\rn)}\le K_\rho,\,I(g(1))<0\}.
\end{aligned} 
\end{equation}
\end{theorem}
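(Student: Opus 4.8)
The plan is to obtain $u_\rho$ as a mountain-pass critical point of $I$ on $\mathcal S_\rho$, in the spirit of Jeanjean's treatment of mass-supercritical NLS, adapted to the quasilinear operator and to the extra zero-order term $|u|^{p-2}u$; the smallness of $\rho$ enters only to keep that term subordinate and to restore compactness. \emph{Geometry.} For $u\in\mathcal S_\rho$ and $s>0$ put $u_s(x):=s^{N/2}u(sx)\in\mathcal S_\rho$; then
\[
I(u_s)=\frac{s^{\sigma_1}}{p}\|\nabla u\|_p^p+\frac{s^{\sigma_2}}{p}\|u\|_p^p-\frac{s^{\sigma_3}}{q}\|u\|_q^q,\qquad \sigma_1=\tfrac{p(N+2)}{2}-N,\ \ \sigma_2=\tfrac{N(p-2)}{2},\ \ \sigma_3=\tfrac{N(q-2)}{2}.
\]
Since $p>2$ one has $0<\sigma_2<\sigma_1$, while the hypothesis $q>p\frac{N+2}{N}$ is exactly $\sigma_3>\sigma_1$; hence $s\mapsto I(u_s)$ tends to $0^+$ as $s\to0$, to $-\infty$ as $s\to\infty$, and has a unique critical point, a strict maximum. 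In particular $\mathcal S_\rho$ contains functions of arbitrarily small $W^{1,p}$-norm and functions with negative energy. On the other hand \eqref{eq:GN:1} and $\|\nabla u\|_p\le\|u\|$ give $I(u)\ge\frac1p\|u\|^p-\frac{C^q}{q}\rho^{(1-\theta)q}\|u\|^{\theta q}$ on $\mathcal S_\rho$, and $\theta q>p$ is again equivalent to $q>p\frac{N+2}{N}$; so for $R>0$ small, $I\ge0$ on $\{u\in\mathcal S_\rho:\|u\|\le R\}$ and $I\ge\delta>0$ on $\{u\in\mathcal S_\rho:\|u\|=R\}$. Taking $K_\rho\in(0,R)$, the class $\mathcal G_\rho$ is nonempty (join a base point of $W^{1,p}$-norm $\le K_\rho$ to $u_s$ with $s$ large), so $c_\rho$ is well defined and $0<\delta\le c_\rho<\infty$. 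Equivalently $c_\rho=\inf_{\mathcal P_\rho}I$ over the Pohozaev-type manifold $\mathcal P_\rho:=\{u\in\mathcal S_\rho:\partial_s|_{s=1}I(u_s)=0\}$, which the fibering above makes a natural constraint.

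\emph{A good Palais--Smale sequence.} Running a minimax/deformation argument for the auxiliary functional $\widetilde I(s,u):=I(u_s)$ on $\R\times\mathcal S_\rho$ --- which carries the same mountain-pass geometry at the same level $c_\rho$ --- produces a sequence $(u_n)\subset\mathcal S_\rho$ that is Palais--Smale for $I|_{\mathcal S_\rho}$ at level $c_\rho$ and, in addition, asymptotically satisfies the Pohozaev identity:
\[
\tfrac{\sigma_1}{p}\|\nabla u_n\|_p^p+\tfrac{\sigma_2}{p}\|u_n\|_p^p-\tfrac{\sigma_3}{q}\|u_n\|_q^q\longrightarrow0 .
\]
Combined with $I(u_n)\to c_\rho$ and $\sigma_2<\sigma_1<\sigma_3$, this bounds $\|\nabla u_n\|_p$, $\|u_n\|_p$, $\|u_n\|_q$; together with $\|u_n\|_2=\rho$ it makes $(u_n)$ bounded in $W^{1,p}(\rn)\cap L^2(\rn)$. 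Writing the constrained Euler--Lagrange relation in the form
\[
-\Delta_p u_n+|u_n|^{p-2}u_n+\lambda_n u_n-|u_n|^{q-2}u_n\longrightarrow0\quad\text{in }(W^{1,p}(\rn)\cap L^2(\rn))',
\]
and testing it against $u_n$, one sees that the multipliers $\lambda_n$ are bounded.

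\emph{Compactness --- the main point.} Because no embedding of $W^{1,p}(\rn)\cap L^2(\rn)$ into $L^q(\rn)$ is compact, one must prevent loss of mass by a concentration-compactness analysis of $(u_n)$. Vanishing is impossible: if $\|u_n\|_q\to0$, the asymptotic Pohozaev identity forces $\|\nabla u_n\|_p,\|u_n\|_p\to0$, hence $I(u_n)\to0<c_\rho$. Dichotomy (the mass splitting into two pieces drifting apart) is excluded by a strict sub-additivity inequality for the mountain-pass levels, $c_\rho<c_{\rho_1}+c_{\rho_2}$ whenever $\rho_1^2+\rho_2^2=\rho^2$ with $\rho_1,\rho_2\in(0,\rho)$ --- this is where $\rho$ small is used, the term $|u|^{p-2}u$ being negligible in that regime. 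Therefore, up to translations, $u_n(\cdot+\tau_n)\to u_\rho$ strongly in $L^q(\rn)$ and in $L^2(\rn)$, so $u_\rho\in\mathcal S_\rho$; reinserting this into the Euler--Lagrange relation, testing against $u_n(\cdot+\tau_n)-u_\rho$ and invoking the $(S_+)$-property of $-\Delta_p$ on $W^{1,p}(\rn)$ upgrades convergence to strong convergence in $W^{1,p}(\rn)$. Passing to the limit, $u_\rho$ solves $-\Delta_p u_\rho+|u_\rho|^{p-2}u_\rho+\lambda_\rho u_\rho=|u_\rho|^{q-2}u_\rho$ with $\lambda_\rho=\lim_n\lambda_n$ and, by strong convergence, $I(u_\rho)=c_\rho$.

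\emph{Positivity and sign of $\lambda_\rho$.} Since $I$ and the geometry above are unchanged by $u\mapsto|u|$, the minimax may be carried out within $\{u\ge0\}$, so $u_\rho\ge0$; $C^{1,\alpha}_{loc}$-regularity for the $p$-Laplacian together with a strong maximum principle then give $u_\rho>0$. Finally $\lambda_\rho>0$: the Nehari identity $\|\nabla u_\rho\|_p^p+\|u_\rho\|_p^p+\lambda_\rho\rho^2=\|u_\rho\|_q^q$ and the Pohozaev identity for the solution allow one to eliminate $\|u_\rho\|_q^q$ and express $\lambda_\rho\rho^2$ as a fixed linear combination of $\|\nabla u_\rho\|_p^p$ and $\|u_\rho\|_p^p$; in the small-mass regime $u_\rho$ concentrates, the ratio $\|\nabla u_\rho\|_p^p/\|u_\rho\|_p^p$ is large, and this combination is positive. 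Thus $(\lambda_\rho,u_\rho)$ solves \eqref{limit-prob} at the level $c_\rho$. The \emph{main obstacle} is the compactness step: the auxiliary-functional device hands us the Pohozaev bound for free, which makes the Palais--Smale sequence bounded, but ruling out dichotomy genuinely requires $\rho$ small; a secondary, genuinely $p>2$ difficulty is the sign of $\lambda_\rho$, which --- unlike the case $p=2$ --- cannot be read off algebraically and relies on the concentration of $u_\rho$ as $\rho\to0$.
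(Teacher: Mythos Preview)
The paper does not prove this statement: Theorem~\ref{thWLZL} is quoted verbatim from \cite{WLZL} and used as a black box (see also Remark~\ref{rem-least-energy-sol}, which records two further facts from \cite{WLZL}: the identity $c_\rho=\inf_{\mathcal P_\rho}I$ and the monotonicity of $\rho\mapsto c_\rho$). So there is no ``paper's own proof'' to compare against; what one can compare is your sketch with the strategy visible in \cite{WLZL} through the paper's references to it, and with the paper's proof of the perturbed result, Theorem~\ref{th-Vle0}.

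Your outline is essentially the correct one --- Jeanjean's auxiliary functional on $\R\times\mathcal S_\rho$ to manufacture a Palais--Smale sequence that almost satisfies the Pohozaev identity, boundedness from that identity, and then a compactness argument. Two comments on the compactness step. First, your phrasing via ``dichotomy'' and ``strict subadditivity $c_\rho<c_{\rho_1}+c_{\rho_2}$'' is borrowed from the \emph{minimization} setting; for mountain-pass levels the cleaner mechanism (and the one \cite{WLZL} actually uses, judging from Remark~\ref{rem-least-energy-sol}) is the profile/splitting decomposition together with $c_\rho=\inf_{\mathcal P_\rho}I$ and the fact that $\rho\mapsto c_\rho$ is non-increasing: each profile $w^j$ lies on its own Pohozaev manifold, hence $I(w^j)\ge c_{\|w^j\|_2}\ge c_\rho$, so $c_\rho=\sum_j I(w^j)\ge k\,c_\rho$ forces $k\le1$. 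Your strict-subadditivity inequality is in fact a corollary of this monotonicity plus $c_\rho>0$, so your argument is not wrong, just indirect. Second, the paper's own proof of Theorem~\ref{th-Vle0} follows exactly this template (splitting Lemma~\ref{splitting-lemma} plus monotonicity of $c_\rho$), so you have independently reproduced the method the authors use in the perturbed case.

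One small point: you say the smallness of $\rho$ enters to rule out dichotomy, but in the argument above it does not --- monotonicity and positivity of $c_\rho$ suffice for the compactness step at \emph{any} $\rho$. Where $\rho$ small is genuinely needed is for $\lambda_\rho>0$, exactly as you identify at the end (compare the paper's Lemma~\ref{lem3.1}, where the same concentration argument $\|\nabla u_\rho\|_p^p/\|u_\rho\|_p^p\to\infty$ as $\rho\to0$ is carried out in detail).
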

\begin{remark}
\label{rem-least-energy-sol}
\begin{enumerate}
\item It is known that the solution $(\lambda_\rho,u_\rho)$ constructed in Theorem \ref{thWLZL} is the least-energy solution to the limit problem \eqref{limit-prob}, in the sense that any solution $(\lambda,v)\in \mathbb{R}\times (W^{1,p}(\rn)\cap L^2(\rn))$ to \eqref{limit-prob} fulfills $I(v)\ge I(u_\rho)$. In fact, if $(\lambda,v)$ solves the limit problem \eqref{limit-prob}, then $v$ belongs to the Pohozaev manifold
\begin{equation}
\mathcal{P}_\rho:=\{u\in \mathcal{S}_\rho:\, P(u)=0\},    
\end{equation}
where
\begin{equation}
 P(u):=\frac{p(N+2)-2N}{2p}\|\nabla u\|^p_p+\frac{N(p-2)}{2p}\|u\|_p^p-\frac{N(q-2)}{2q}\|u\|_q^q.  
\end{equation}
In addition, due to Lemma $2.3$ of \cite{WLZL}, we have 
$$c_\rho=\inf_{u\in\mathcal{P}_\rho} I(u)=I(u_\rho).$$
This gives $I(v)\ge I(u_\rho)$.
\item Moreover, by Lemma $2.8$ of \cite{WLZL}, the function $\rho\mapsto c_\rho$ is non-increasing in $(0,\infty)$.
\end{enumerate}
\end{remark}
The solution constructed in Theorem \ref{thWLZL} will be used in the proof of our main Theorem~\ref{th-Vle0}, in which we prove the existence of a mountain-pass type solution to problem (\ref{eq:EL:1}) under the assumptions
\begin{equation}\label{est-Vle0}
\begin{aligned}
&V\in L^{\frac{N}{p}}(\rn),\,\tilde{W}:=V(\cdotp)|\cdotp|\in L^{\frac{N}{p-1}}(\rn),\,V\le 0,\,\lim_{|x|\to\infty}V(x)=0,\,
\|V\|_{\frac{N}{p}}<S_p,\\
&
p^2S_p^{-\frac{p-1}{p}}\|\tilde{W}\|_{\frac{N}{p-1}}+N\max\left\{\frac{p}{2},(q-p-2)^+\right\}S_p^{-1}\|V\|_{N/p}< \min\left\{2\left(\frac{N}{q}-\frac{N-p}{p}\right),Nq-p(N+2)\right\},   
\end{aligned}
\end{equation}
where 
$$S_p:=\inf_{u\in D^{1,p}(\rn)}\frac{\|\nabla u\|^p_p}{\|u\|^p_{p^*}}=\pi^{-1/2}N^{-1/p}\left(\frac{p-1}{N-p}\right)^{1-\frac{1}{p}}\left(\frac{\Gamma(1+N/2)\Gamma(N)}{\Gamma(N/p)\Gamma(1+N-N/p)}\right)^{1/N}>0$$ 
is the Aubin-Talenti constant, or equivalently $S_p^{-\frac{1}{p}}$ is the best constant in the Sobolev embedding $D^{1,p}(\rn)\subset L^{p^*}(\rn)$, being $$D^{1,p}(\rn):=\{u\in L^{p^*}(\rn):\,\nabla u\in L^p(\rn)\}.$$
\begin{theorem}\label{th-Vle0}
Let $N\ge 3$, $2<p<N$ and $p\frac{N+2}{N}<q<p^*$. Assume that $V\ne 0$ fulfills (\ref{est-Vle0}). Then there exists $\rho_0>0$ such that, for any $\rho\in(0,\rho_0)$, there exists a solution $(\lambda,u)\in(0,\infty)\times ((W^{1,p}(\rn)\cap L^2(\rn))$ to problem (\ref{eq:EL:1}) such that $0< J_V(u)<c_\rho$. 
\end{theorem}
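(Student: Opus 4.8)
The plan is to realise the solution as a constrained mountain--pass critical point of $J_V$ on $\mathcal{S}_\rho$ at an energy level strictly below $c_\rho$, recovering compactness precisely from this strict gap. \emph{Step 1 (geometry and the minimax level).} Using $\int_{\rn}V|u|^p\,dx\ge -S_p^{-1}\|V\|_{N/p}\|\nabla u\|_p^p$ (Sobolev) together with $\|V\|_{N/p}<S_p$, the Gagliardo--Nirenberg inequality \eqref{eq:GN:1}, and the fact that $q\theta>p$ exactly when $q>p\frac{N+2}{N}$, the mountain--pass analysis of \cite{WLZL} for the functional $I$ carries over to $J_V$ on $\mathcal{S}_\rho$: for suitable $0<K_\rho<R_\rho$ one has $J_V\ge\delta>0$ on $\{u\in\mathcal{S}_\rho:\|u\|=R_\rho\}$, while $J_V$ is small near the origin and takes negative values. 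One then sets
\[
c_{V,\rho}:=\inf_{g\in\mathcal{G}_\rho}\max_{t\in[0,1]}J_V(g(t))\ge\delta>0,
\]
with $\mathcal{G}_\rho$ the class of paths of Theorem \ref{thWLZL} rewritten for $J_V$; since $V\le0$ we have $J_V\le I$, hence $c_{V,\rho}\le c_\rho$.

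\emph{Step 2 (the strict inequality $c_{V,\rho}<c_\rho$).} Let $(\lambda_\rho,u_\rho)$ be the least--energy solution of \eqref{limit-prob} provided by Theorem \ref{thWLZL}, chosen radial and positive, and consider the fibre $\gamma_0(t)(x):=t^{N/2}u_\rho(tx)$, $t>0$, which keeps $\|\cdot\|_2=\rho$ fixed; a suitable truncation $\gamma_0|_{[\epsilon,T]}$, reparametrised on $[0,1]$, belongs to $\mathcal{G}_\rho$, since its endpoints have small $W^{1,p}$--norm and negative $J_V$, respectively. Since $u_\rho\in\mathcal{P}_\rho$ (Remark \ref{rem-least-energy-sol}), the map $t\mapsto I(\gamma_0(t))$ is a sum of two increasing and one decreasing power of $t$ whose unique critical point is $t=1$, a strict maximum, so $\max_t I(\gamma_0(t))=I(u_\rho)=c_\rho$. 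Because $V\le0$, $J_V(\gamma_0(t))\le I(\gamma_0(t))\le c_\rho$ for all $t$; moreover $J_V(\gamma_0(t))\to0$ as $t\to0^+$ and $J_V(\gamma_0(t))\to-\infty$ as $t\to\infty$ ($Nq>p(N+2)$ lets the $L^q$ term outgrow the potential one), so the maximum of $J_V\circ\gamma_0$ is attained at some $t^\ast\in(0,\infty)$. As $u_\rho>0$ and $V\le0$, $V\not\equiv0$, we have $\int_{\rn}V|\gamma_0(t^\ast)|^p\,dx<0$, whence $J_V(\gamma_0(t^\ast))<I(\gamma_0(t^\ast))\le c_\rho$, and therefore $c_{V,\rho}\le\max_t J_V(\gamma_0(t))<c_\rho$.

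\emph{Step 3 (bounded Palais--Smale sequence and compactness).} By a minimax argument on $\mathcal{S}_\rho$ enriched with the Pohozaev information (Jeanjean's monotonicity trick, as used for the limit problem in \cite{WLZL}) one produces $(u_n)\subset\mathcal{S}_\rho$ with $J_V(u_n)\to c_{V,\rho}$, $(J_V|_{\mathcal{S}_\rho})'(u_n)\to0$ and $P_V(u_n)\to0$, where $P_V$ is the Pohozaev functional of \eqref{eq:EL:1}; it is here that $\tilde W=V(\cdot)|\cdot|\in L^{N/(p-1)}(\rn)$ enters, so that the potential term of $P_V$---integrated by parts and estimated by $\|\tilde W\|_{N/(p-1)}\|u\|_{p^*}^{p-1}\|\nabla u\|_p$---is finite. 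A combination $J_V(u_n)-\kappa P_V(u_n)$ is, by $q>p\frac{N+2}{N}$, $V\le0$ and the quantitative bound in \eqref{est-Vle0}, bounded below by a coercive combination of $\|\nabla u_n\|_p^p$ and $\|u_n\|_p^p$; with $\|u_n\|_2=\rho$ this bounds $(u_n)$ in $W^{1,p}(\rn)\cap L^2(\rn)$, and the Lagrange multipliers $\lambda_n$ are bounded too. Passing to a subsequence, $u_n\rightharpoonup u$ in $W^{1,p}(\rn)\cap L^2(\rn)$, $u_n\to u$ and $\nabla u_n\to\nabla u$ a.e.\ (Boccardo--Murat), $\lambda_n\to\lambda$; since $V\in L^{N/p}(\rn)$ vanishes at infinity, $v\mapsto\int_{\rn}V|v|^p\,dx$ is weakly continuous on bounded subsets of $W^{1,p}(\rn)$ (split $\rn$ into a large ball, where $W^{1,p}\hookrightarrow L^s$ is compact for $s<p^*$, and its complement, where the $L^{N/p}$--norm of $V$ is small), so $u$ solves $-\Delta_p u+(1+V)|u|^{p-2}u+\lambda u=|u|^{q-2}u$. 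A concentration--compactness analysis then gives strong convergence: vanishing in $L^q$ is ruled out because $P_V(u_n)\to0$ with $\|u_n\|_q\to0$ would force $\|\nabla u_n\|_p,\|u_n\|_p\to0$ (again using \eqref{est-Vle0} to absorb the potential terms of $P_V$), against $J_V(u_n)\to c_{V,\rho}>0$; each profile $w_j$ escaping to infinity solves the limit Euler--Lagrange equation (because $V(\cdot-y_n)\to0$ locally in $L^{N/p}$), hence lies on $\mathcal{P}_{\rho'}$ for its mass $\rho'\in(0,\rho]$, so by Remark \ref{rem-least-energy-sol} and the monotonicity of $\rho\mapsto c_\rho$ carries energy at least $c_{\rho'}\ge c_\rho$; since a Brezis--Lieb splitting gives $c_{V,\rho}=J_V(u)+\sum_j I(w_j)+o(1)$ with $J_V(u)\ge0$, the presence of any $w_j\ne0$ would force $c_{V,\rho}\ge c_\rho$, contradicting Step 2. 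Hence $u\ne0$ and $u_n\to u$ in $W^{1,p}(\rn)\cap L^2(\rn)$, so $\|u\|_2=\rho$, $P_V(u)=0$ and $J_V(u)=c_{V,\rho}\in(0,c_\rho)$.

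\emph{Step 4 (sign of $\lambda$ and positivity).} Feeding $P_V(u)=0$ into the identity $\|u\|^p+\int_{\rn}V|u|^p\,dx-\|u\|_q^q=-\lambda\rho^2$ (equation tested against $u$), and using $q>p\frac{N+2}{N}$, $V\le0$, \eqref{est-Vle0} and---for $\rho$ small---the control on $\|\nabla u\|_p$, $\|u\|_p$ inherited from $J_V(u)\in(0,c_\rho)$ as in \cite{WLZL}, yields $\lambda>0$; replacing $u$ by $|u|$ (the functionals are even) and invoking the strong maximum principle for $-\Delta_p$ gives $u>0$. This proves Theorem \ref{th-Vle0} with $\rho_0$ inherited from Theorem \ref{thWLZL}. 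I expect the compactness of Step 3 to be the main obstacle: excluding the escape of mass to infinity forces a comparison with the entire family of limit problems of mass $\rho'\le\rho$, and relies crucially both on the strict inequality $c_{V,\rho}<c_\rho$ and on the $p$--Laplacian versions of the Brezis--Lieb and Boccardo--Murat lemmas needed to make the energy splitting rigorous.
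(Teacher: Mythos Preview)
Your outline matches the paper's strategy step for step: mountain-pass geometry on $\mathcal{S}_\rho$, the strict inequality via the scaling fibre $t\mapsto t^{N/2}u_\rho(t\,\cdot)$, a Palais--Smale sequence carrying asymptotic Pohozaev information, and compactness by splitting the energy and comparing with $c_\rho$.

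There is, however, one genuine gap. In Step~3 you write ``with $J_V(u)\ge 0$'' for the weak limit $u$ and immediately deduce the contradiction $c_{V,\rho}\ge c_\rho$ from any escaping profile. That inequality is not free: $J_V$ is unbounded below on $\mathcal{S}_\rho$, and nothing in the weak convergence places $u$ in a region where $J_V\ge 0$. If $u=0$ the claim is trivial, but when $u\ne 0$ (which cannot be excluded a priori while profiles may still escape) one needs the separate fact that \emph{every} solution of $-\Delta_p u+(1+V)|u|^{p-2}u+\lambda u=|u|^{q-2}u$ has nonnegative energy. The paper isolates this as a lemma: substitute the Pohozaev identity $P_V(u)=0$ into $J_V(u)$ to eliminate $\|u\|_q^q$, then absorb the two potential terms via
$\bigl|\int V|u|^p\bigr|\le S_p^{-1}\|V\|_{N/p}\|\nabla u\|_p^p$ and
$\bigl|\int V|u|^{p-2}u\,\nabla u\cdot x\bigr|\le S_p^{-(p-1)/p}\|\tilde{W}\|_{N/(p-1)}\|\nabla u\|_p^p$.
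The second line of \eqref{est-Vle0} is precisely the condition making the resulting coefficient of $\|\nabla u\|_p^p$ positive. You carry out an almost identical combination to bound $(u_n)$, but you must also run it \emph{at the limit $u$}; otherwise the chain $c_{V,\rho}=J_V(u)+\sum_j I(w_j)\ge c_\rho$ is unjustified and Step~3 does not close. Two minor remarks: you take $u_\rho$ radial, but radiality of the least-energy solution to \eqref{limit-prob} is open for $p\ne 2$ (Remark~\ref{rem-rad}); fortunately your Step~2 never actually uses it. And the paper secures $\lambda>0$ \emph{before} invoking the splitting decomposition, whereas you defer it to Step~4; your order can be made to work, since $P(w_j)=0$ together with $I(w_j)\ge c_\rho$ already yields a uniform lower bound on $\|w_j\|$ and hence finitely many profiles, independently of the sign of $\lambda$.
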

Similar results were proved in \cite{cingolani2019stationary,PR} for the Schr\"{o}dinger-Poisson equation in dimension $2$ and $3$ respectively, that is a semilinear non-local equation involving the classical Laplacian, namely the case $p=2$, and in \cite{bartsch2021normalized} in the case $p=2$ without any non-local term.\\

% We note that the potential $V$ is not required to satisfy any symmetry property, such as, for instance, radial symmetry. In any case, if we consider a radial potential $V$, we can prove the existence of a radial solution $(\lambda,u)\in(0,\infty)\times(W^{1,p}(\rn)\cap L^2(\rn))$ to Problem \eqref{eq:EL:1}. Set
% $$W^{1,p}_{rad}(\rn):=\{u\in W^{1,p}(\rn):\,u(x)\,\text{is radial}\}.$$
% \begin{theorem}\label{th-rad}
% Let $N\ge 3$, $2<p<N$ and $p\frac{N+2}{N}<q<p^*$. Assume that $V$ is radial and fulfills (\ref{est-Vle0}). Then there exists $\rho_0>0$ such that, for any $\rho\in(0,\rho_0)$, there exists a solution $(\lambda,u)\in(0,\infty)\times ((W^{1,p}_{rad}(\rn)\cap L^2(\rn))$ to problem (\ref{eq:EL:1}).
% \end{theorem}
\begin{remark}\label{rem-rad}
% We stress that, in the radial case, we do not need the assumption $V\ne 0$. As a consequence, Theorem \ref{th-rad} proves the existence of a radial solution to the limit problem \eqref{lim-eq}. 
It would be interesting to understand whether a radial solution 
%constructed in Theorem \ref{th-rad} 
in the case $V=0$ can be constructed and, if it is the case, whether it coincides with the one constructed in \cite{WLZL}. 
Moreover, it would be interesting to understand whether any solution $u\in W^{1,p}(\rn)\cap L^2(\rn)$ to \eqref{lim-eq} is radial and whether the radial solution is unique or not. This is known for $p=2$, while these problems are open for $p\ne 2$.
\end{remark}
 The open problem stated in Remark \ref{rem-rad}  is very relevant since the symmetry properties are often very helpful in proving the existence of solutions to PDEs. This can be seen in many papers available in the literature, such as \cite{R2, malchiodi2018periodic, agudelo2015solutions, agudelo2022doubling, agudelo2022k}, where some solutions to semilinear PDEs are constructed by prescribing the zero-level set and exploiting its symmetry properties.\\
% Another example of radial normalised solutions was constructed in Theorem $2.4$ of \cite{bartsch2021normalized} in the classical case $p=2$.\\

In order to solve these open problems, we think it is useful to apply the following compactness Lemma for radial functions, which is a generalisation of the Strauss Theorem, that is a compactness result proved in \cite{kavian1993introduction} for the embedding $H^{1}_{rad}(\rn)\subset L^{q}(\rn)$ with $q\in(2,2^*)$ (see Theorem $1.2$ there).
\begin{lemma}\label{lemma-compact-rad}
Let $p\in(1,N)$ and $q\in(p,p^*)$. Then the embedding $W^{1,p}_{rad}(\rn)\subset L^q(\rn)$ is compact.    
\end{lemma}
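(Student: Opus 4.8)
The plan is to reproduce, in the $W^{1,p}$ setting, the classical mechanism behind Strauss's compactness lemma. The crucial ingredient is a pointwise radial decay estimate: there is a constant $C=C(N,p)>0$ such that every $u\in W^{1,p}_{rad}(\rn)$ satisfies
$$|u(x)|\le C\,|x|^{-\frac{N-1}{p}}\,\|u\|\qquad\text{for a.e. }x\in\rn\setminus\{0\}.$$
To establish it I would first take a radial $u\in C_c^\infty(\rn)$, write $u=u(r)$ with $r=|x|$, and combine the fundamental theorem of calculus with H\"older's inequality (splitting $s^{N-1}=s^{(N-1)(p-1)/p}\,s^{(N-1)/p}$ and using $s^{-(N-1)}\le r^{-(N-1)}$ on $[r,\infty)$):
$$|u(r)|^p=-\int_r^\infty p\,|u(s)|^{p-2}u(s)\,u'(s)\,ds\le \frac{p}{r^{N-1}}\int_r^\infty |u(s)|^{p-1}|u'(s)|\,s^{N-1}\,ds\le \frac{C}{r^{N-1}}\,\|u\|_p^{p-1}\,\|\nabla u\|_p.$$
Since radial mollification and radial truncation preserve radial symmetry, $C_c^\infty(\rn)\cap\{\text{radial}\}$ is dense in $W^{1,p}_{rad}(\rn)$, so the estimate extends to all radial $W^{1,p}$ functions (equivalently, one checks that a radial $W^{1,p}$ function admits a representative that is absolutely continuous along almost every ray).

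Next, let $(u_n)\subset W^{1,p}_{rad}(\rn)$ be bounded; up to a subsequence $u_n\rightharpoonup u$ weakly in $W^{1,p}(\rn)$, and the limit $u$ is radial because the subspace of radial functions is closed, hence weakly closed. Replacing $u_n$ by $v_n:=u_n-u$, which is radial, bounded in $W^{1,p}$, and satisfies $v_n\rightharpoonup 0$, it suffices to prove $v_n\to 0$ in $L^q(\rn)$. Put $M:=\sup_n\|v_n\|<\infty$ and fix $R>0$. On the exterior region, using the decay estimate and $q>p$,
$$\int_{|x|\ge R}|v_n|^q\,dx\le\bigl(C R^{-\frac{N-1}{p}}M\bigr)^{q-p}\int_{|x|\ge R}|v_n|^p\,dx\le C'M^q\,R^{-\frac{(N-1)(q-p)}{p}},$$
which tends to $0$ as $R\to\infty$, uniformly in $n$. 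On the ball $B_R$, the Rellich--Kondrachov theorem gives the compact embedding $W^{1,p}(B_R)\hookrightarrow\hookrightarrow L^q(B_R)$ (here $q<p^*$ and $p<N$ are used), so $v_n\to 0$ strongly in $L^q(B_R)$.

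Putting the two bounds together yields $\limsup_{n\to\infty}\|v_n\|_q^q\le C'M^q R^{-(N-1)(q-p)/p}$ for every $R>0$; letting $R\to\infty$ gives $v_n\to 0$ in $L^q(\rn)$, i.e. $u_n\to u$ in $L^q(\rn)$, which is the asserted compactness. The only genuinely delicate point is the rigorous proof of the radial decay estimate for non-smooth functions --- one must justify the absolute continuity along rays and the radial density argument --- but this is standard; everything else reduces to H\"older's inequality, the classical Rellich--Kondrachov theorem, and the arithmetic constraints $p<q<p^*$ which are exactly the hypotheses of the lemma.
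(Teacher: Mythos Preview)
Your argument is correct and mirrors the paper's proof almost exactly: both establish the radial decay $|u(x)|\le C|x|^{-(N-1)/p}\|u\|$ by density of smooth radial functions and then split $\|u_n-u\|_q^q$ into an exterior part controlled by the decay and an interior part handled by Rellich--Kondrachov. The only cosmetic difference is that you use H\"older's inequality in the decay step whereas the paper uses Young's inequality, and you make explicit that the weak limit is radial, a point the paper leaves implicit.
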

We stress that Lemma \ref{lemma-compact-rad} holds true for any $p\in(1,N)$.\\

%Moreover, we note that Problem \eqref{eq:EL:1} is a special case of 

%with $r\in[2,p^*)$. Here we establish existence results for $r=2$ only, since in this case we can take advantage of the Hilbert structure of $L^2(\rn)$, which helps us construct a bounded Palais-Smale sequence and is not available for $r\ne 2$. 
In the case $r\ne 2$ in \eqref{eq:EL:r}, some results for $V=0$ are proved in \cite{serrin2000uniqueness}, where existence and uniqueness of a radial ground state solution is proved for $p\in (1,N)$ and $1<p\le r<q<p^*$. The existence of normalised solutions with non-trivial potentials remains open. However, we give a decay estimate for radial positive solutions to \eqref{eq:EL:r} in the case $V=0$, $p\in(1,N)$ and $1<p<q<r<p^*$, that is we prove that
\begin{equation}\label{est-rad-r-constraint}
|u(x)|\le c|x|^{-\frac{p}{q-p}}  \qquad\forall\,x\in\rn. 
\end{equation}
%We note that our result is quite strong since there is no upper bound for $p$. 
The strategy of the proof is similar to the one adapted in Proposition $2.2$ of \cite{rizzi2024some} to obtain a decay estimate for radial positive solutions to some semilinear elliptic equations whose nonlinearity behaves like a power in a right neighbourhood of the origin.\\

The plan of the paper is the following. The main results are proved in Section \ref{sec-Vle0}. In particular, in Subsection \ref{subsec-compactness} we prove a very useful Lemma which describes the asymptotic behaviour of the bounded Palais-Smale sequences of the functional $J_V(\cdotp)+\frac{\lambda}{2}\|\cdotp\|_2^2$ with $\lambda>0$, known as the \textit{splitting Lemma}. Then in Subsection \ref{subsec-bdPS} we conclude the proof of Theorem \ref{th-Vle0}, with the aid of the splitting Lemma. Then in Subsection \ref{subsec-rad} we prove Lemma \ref{lemma-compact-rad} for radial functions. The decay estimate \eqref{est-rad-r-constraint} is proved in the Appendix.

\section{The proofs of the main results}\label{sec-Vle0}
%The proof is divided into two parts. In subsection \ref{subsec-bdPS} we construct a bounded Palais-Smale sequence of $J_V$ at level $m_{V,\rho}$, then in Subsection \ref{subsec-compactness} we recover compactness by showing that the weak limit is a strong limit as well, hence it is actually a normalised solution to \eqref{eq:EL:1}.
\subsection{The splitting Lemma \ref{splitting-lemma}}\label{subsec-compactness}

In order to prove Theorem \ref{th-Vle0}, we need to describe the behaviour as $n\to\infty$ of the bounded Palais-Smale sequences of the functional $$J_{V,\lambda}(u):=J_V(u)+\lambda\int_{\R^N}u^2 dx,\qquad\forall\,u\in W^{1,p}(\rn)\cap L^2(\rn).$$
This can be done by means of the \textit{splitting Lemma}.\\

This Lemma will be proved in a slightly more general context, that is we take $p\in[2,N)$ and a potential $V$ fulfilling the following assumptions\\
\begin{enumerate}[label={($V_\arabic*$)}]
\item $V=V^+-V^-$, with $V^\pm\in L^{q^\pm}(\rn)$ for some $q^\pm\in\left[\frac{N}{p},\infty\right]$, $V^\pm\ge 0$, $V^+ V^-=0$.\label{V1}
\item $\|V^-\|_\infty<\lambda$ if $q^-=\infty$ or
$$\|V^-\|_{q^-}<S(p,q^-),\qquad\text{where}\,S(p,q):=\inf\frac{\|\nabla u\|^p_p}{\|u\|^p_{\frac{pq}{q-1}}},\,\forall\,q\in[p,p^*].$$
We note that $S(p,q)^{-1/p}$ is the best constant in the Sobolev embedding $D^{1,p}(\rn)\subset L^q(\rn)$.\label{V2}
\item $\lim_{|x|\to\infty}V^\pm(x)=0$.\label{V3}
\end{enumerate}
%Referring~\ref{V1}-\ref{V2}.
% $(V_1)$ $V=V^+-V^-$, with $V^\pm\in L^{q^\pm}(\rn)$ for some $q^\pm\in\left[\frac{N}{p},\infty\right]$, $V^\pm\ge 0$, $V^+ V^-=0$.\\

% $(V_2)$ $\|V^-\|_\infty<\lambda$ if $q^-=\infty$ or
% $$\|V^-\|_{q^-}<S(p,q^-),\qquad\text{where}\,S(p,q):=\inf\frac{\|\nabla u\|^p_p}{\|u\|^p_{\frac{pq}{q-1}}},\,\forall\,q\in[p,p^*].$$
% We note that $S(p,q)^{-1/p}$ is the best constant in the Sobolev embedding $D^{1,p}(\rn)\subset L^q(\rn)$.\\

% $(V_3)$ $\lim_{|x|\to\infty}V^\pm(x)=0$.\\

We note that, under these assumptions about $V$, the splitting Lemma \ref{splitting-lemma} that we prove here is a generalisation of the splitting Lemma proved in [Peng-Rizzi] and applies to more general situations than the ones needed here.
We define
\[
I_{\lambda}(u):= I(u)+\frac{\lambda}{2}\int_{\rn}u^2\, dx\qquad \forall\, u\in W^{1,p}(\rn)\cap L^2(\rn).
\]

\begin{lemma}[Splitting Lemma]
\label{splitting-lemma}
Assume that $V$ satisfies $(V_1),\,(V_2)$ and $(V_3)$. Let $\{u_n\}\subset W^{1,p}(\rn)\cap L^2(\rn)$ be a bounded $(PS)$ sequence for $J_{V,\lambda}$ such that $u_n\rightharpoonup u$ in $W^{1,p}(\rn)\cap L^2(\rn)$. Then either $u_k\to u$ strongly in $W^{1,p}(\rn)\cap L^2(\rn)$ or there exists an integer $k\geq 1$, $k$ non-trivial solutions $w^1,\cdots,w^k\in W^{1,p}(\mathbb{R}^N)\cap L^2(\rn)$ to the limit equation 
\begin{equation}\label{lim-eq}
% -\Delta u+\lambda u+(|x|^{-1}\ast |u|^2) u=|u|^{p-2}u\quad\text{in}\quad\R^3
-\Delta_p u+\lambda u+|u|^{p-2}u=|u|^{q-2}u\quad\text{in}\quad\rn
\end{equation}
and $k$ sequences $\{y_n^j\}_n\subset \rn,1\leq j\leq k$ such that $|y_n^j|\rightarrow\infty$ and $|y^j_n-y^i_n|\to\infty$ for $i\ne j$ as $n\rightarrow\infty$ and a subsequence, still denoted by $u_n$, such that
\begin{equation}\label{eq2.1}u_n=u+\sum_{j=1}^kw^j(\cdot-y_n^j )+o(1)~\text{strongly~in}~W^{1,p}(\rn)\cap L^2(\rn).
\end{equation}
Furthermore, one has
\begin{equation}\label{2.2}
\|u_n\|_2^2=\|u\|_2^2+\sum_{j=1}^k\|w^j\|_2^2+o(1)
\end{equation}
and
\begin{equation}\label{2.3}
J_{V,\lambda}(u_n)= J_{V,\lambda}(u)+\sum_{j=1}^k I_{\lambda}(w^j)+o(1) .
\end{equation}   
\end{lemma}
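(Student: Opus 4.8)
The plan is to follow the classical Struwe–Benci–Cerami concentration-compactness scheme, adapted to the $p$-Laplacian and to the presence of the (vanishing, possibly sign-changing) potential $V$. Set $v_n := u_n - u$, so that $v_n \rightharpoonup 0$ in $W^{1,p}(\rn)\cap L^2(\rn)$. First I would record the standard facts that follow from weak convergence and the $(PS)$ property: $u$ is itself a weak solution of
\begin{equation*}
-\Delta_p u + (1+V(x))|u|^{p-2}u + \lambda u = |u|^{q-2}u \quad\text{in }\rn,
\end{equation*}
because the nonlinear terms pass to the limit on bounded sets (Brezis–Lieb and the compactness of the local Sobolev embeddings), and that $\{v_n\}$ is again a bounded $(PS)$ sequence, but now for the \emph{limit} functional $I_\lambda$: the potential term drops out since $V^\pm$ vanish at infinity and $v_n\rightharpoonup 0$, so $\int_{\rn} V(x)|v_n|^{p-2}v_n\varphi\,dx \to 0$ for every test function and $\int_{\rn} V(x)|v_n|^p\,dx\to 0$ (this uses $(V_1)$–$(V_3)$ together with the splitting of the $L^{p^*}$-norm). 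The Brezis–Lieb lemma, applied to $\|\nabla\cdot\|_p^p$, $\|\cdot\|_p^p$, $\|\cdot\|_q^q$ and $\|\cdot\|_2^2$, then yields $I_\lambda(v_n) = J_{V,\lambda}(u_n) - J_{V,\lambda}(u) + o(1)$ and $\|v_n\|_2^2 = \|u_n\|_2^2 - \|u\|_2^2 + o(1)$, which are exactly \eqref{2.2} and \eqref{2.3} once the iteration below terminates.

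Next I would run the iteration. If $v_n \to 0$ strongly we are done. Otherwise, a vanishing–dichotomy argument (or a direct $L^p$-argument à la Lions) shows that $\delta := \limsup_n \sup_{y\in\rn}\int_{B_1(y)}|v_n|^p\,dx > 0$; pick $y_n^1$ with $\int_{B_1(y_n^1)}|v_n|^p\,dx \ge \delta/2$. Since $v_n\rightharpoonup 0$ we must have $|y_n^1|\to\infty$. Then $\tilde v_n^1 := v_n(\cdot + y_n^1)$ is bounded, hence (up to a subsequence) $\tilde v_n^1 \rightharpoonup w^1 \neq 0$, and because translations are isometries and $V^\pm$ vanish at infinity, $w^1$ solves the limit equation \eqref{lim-eq}. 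Set $v_n^1 := v_n - w^1(\cdot - y_n^1)$; again by Brezis–Lieb (for all four norms) one gets $I_\lambda(v_n^1) = I_\lambda(v_n) - I_\lambda(w^1) + o(1)$, $\|v_n^1\|_2^2 = \|v_n\|_2^2 - \|w^1\|_2^2 + o(1)$, and $v_n^1$ is still a $(PS)$ sequence for $I_\lambda$ with $v_n^1\rightharpoonup 0$. Iterate: at each stage either the remainder converges strongly to $0$ and we stop, or we extract a new nonzero $w^{j}$ and sequence $y_n^{j}$; the mutual divergence $|y_n^i - y_n^j|\to\infty$ is forced because otherwise the bubbles would overlap and $w^j$ would be a translate of $w^i$, contradicting the $L^p$-mass being concentrated near \emph{new} points. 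Finally, since each nonzero solution of \eqref{lim-eq} has energy $I_\lambda$ bounded below by a positive constant (a uniform lower bound coming from the Sobolev and Gagliardo–Nirenberg inequalities applied on the Nehari/Pohozaev constraint — $\|\nabla w\|_p$ is bounded away from $0$, hence so is $I_\lambda(w)>0$), the energy decrement $I_\lambda(v_n) \mapsto I_\lambda(v_n^1) \mapsto \cdots$ is strictly decreasing by a fixed amount, so the process stops after finitely many steps $k\ge 1$. Collecting the telescoped identities gives \eqref{eq2.1}, \eqref{2.2}, \eqref{2.3}.

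The main obstacle, and where most of the genuine work lies, is the \textbf{passage to the limit in the quasilinear operator}: to conclude that $u$ and each $w^j$ are weak solutions one needs the almost-everywhere convergence of the gradients, $\nabla u_n \to \nabla u$ a.e. (and likewise after translation), which for the $p$-Laplacian is \emph{not} a free consequence of weak convergence. The standard remedy is the Boccardo–Murat / Degiovanni-type argument: test the equation against $(u_n-u)$ cut off suitably, use the $(PS)$ condition to show $\langle -\Delta_p u_n + \Delta_p u,\, u_n - u\rangle \to 0$, and then invoke the strict monotonicity inequality for the $p$-Laplacian (valid since $p\ge 2$) to upgrade this to strong $W^{1,p}_{loc}$ convergence, hence a.e. convergence of gradients along a subsequence. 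The same device must be applied after each translation, which is legitimate since the translated potentials $V(\cdot + y_n^j)\to 0$ in $L^{q^\pm}_{loc}$. A secondary technical point is handling the $L^2$-component simultaneously with the $W^{1,p}$-component — but here $p\ge 2$ and the $L^2$-norm interact well, and the Brezis–Lieb splitting for $\|\cdot\|_2^2$ is elementary, so this adds bookkeeping rather than a real difficulty.
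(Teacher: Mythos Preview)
Your proposal is correct and follows essentially the same route as the paper: set the remainder $v_n=u_n-u$, use a Boccardo--Murat cutoff argument together with the strict monotonicity of the $p$-Laplacian (for $p\ge 2$) to obtain $\nabla u_n\to\nabla u$ a.e., apply Brezis--Lieb to all four norms, show the potential term drops out on $v_n$ via $(V_1)$--$(V_3)$, then run the Lions concentration argument (the paper uses unit cubes where you use balls) to extract the bubbles $w^j$, and terminate after finitely many steps by a uniform positive lower bound on the energy (or equivalently the $W^{1,p}$-norm) of nonzero solutions to the limit equation. Your identification of the a.e.\ gradient convergence as the genuine quasilinear obstacle, and of the remedy, matches the paper exactly.
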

\begin{proof}
The proof is similar to the one of the splitting Lemma $2.6$ in \cite{PR}. However, since some meaningful differences appear, it is worth writing it down in a detailed form.\\

First we note that, since $\psi^1_n:=u_n-u$ is bounded in $W^{1,p}(\rn)\cap L^2(\rn)$ and $\psi^1_n\rightharpoonup 0$ in $W^{1,p}(\rn)\cap L^2(\rn)$, then $\psi^1_n$ is bounded in $L^{\bar{p}}(\rn)$ for any $\bar{p}\in[p,p^*]$, $p\ge 2$,
and $\psi^1_n\rightarrow 0$ in $L^{\bar{p}}_{\rm loc}(\rn)$.
In particular, $\psi^1_n\to 0$ pointwise a.e. in $\rn$.%upto a subsequence!
%and $\nabla u_n$ is bounded in $L^p(\rn)$.\\

Moreover, it is also possible to see that $\nabla \psi^1_n\to 0$ pointwise a.e. in $\R^N$. In order to prove this fact, we take a radial cutoff function $\varphi\in C^\infty(\rn)$ such that $\varphi=1$ in $B_1(0)$ and $\varphi=0$ in $\rn\backslash B_2(0)$, we set, for $R>0$, $\varphi_R(x):=\varphi(x/R)$. %such that the support of $\varphi_R(x)$ is in $B_{2R}(0)$.
We note that
$$\nabla J_V(u_n)[\varphi_R(u_n-u)]=o_n(1),$$
since $u_n$ is bounded in $W^{1,p}(\rn)\cap L^2(\rn)$ and $\nabla J_V(u_n)\to 0$ in $(W^{1,p}(\rn)\cap L^2(\rn))'$. Moreover, using that $V^\pm\in L^{q^\pm}(\rn)$ for some $q^\pm\in[\frac{N}{p},\infty]$ and $u_n\to u$ strongly in $L^{\bar{p}}_{\rm loc}(\rn)$, we can see that
\begin{equation}\notag
\begin{aligned}
\nabla J_V(u_n)[\varphi_R(u_n-u)]&=\int_{B_{2R}(0)}|\nabla u_n|^{p-2}\nabla u_n\cdotp\nabla(u_n-u)\varphi_R dx\\
&+\int_{B_{2R}(0)\setminus B_R(0)}|\nabla u_n|^{p-2}(u_n-u)\nabla u_n\cdotp\nabla\varphi_R dx\\
&+\int_{B_{2R}(0)}|u_n|^{p-2}u_n(u_n-u)\varphi_R (1+V(x))dx\\
&-\int_{B_{2R}(0)}|u_n|^{q-2}u_n(u_n-u)\varphi_R dx\\
&=\int_{B_{2R}(0)}|\nabla u_n|^{p-2}\nabla u_n\cdotp\nabla(u_n-u)\varphi_R dx+o_n(1).
\end{aligned}   
\end{equation}
%which shows that 
Thereby $$\int_{B_{2R}(0)}|\nabla u_n|^{p-2}\nabla u_n\cdotp\nabla(u_n-u)\varphi_R dx=o_n(1),$$
so that
$$\int_{B_{2R}(0)}(|\nabla u_n|^{p-2}\nabla u_n-|\nabla u|^{p-2}\nabla u)\cdotp\nabla(u_n-u)\varphi_R dx=o_n(1),$$
since $\nabla u_n\rightharpoonup\nabla u$ weakly in $L^{p}(\rn)$. As a consequence, using the inequality
$$(|\xi|^{p-2}\xi-|\zeta|^{p-2}\zeta)\cdotp(\xi-\zeta)\ge C|\xi-\zeta|^p,\qquad\forall\,\xi,\zeta\in\rn,$$
for some constant $C>0$, we have
$$\int_{B_{2R}(0)}|\nabla(u_n-u)|^p \varphi_Rdx=o_n(1),$$
which shows that $\nabla u_n\to\nabla u$ in $L^p_{\rm loc}(\rn)$, so in particular $\nabla u_n\to\nabla u$ pointwise a.e. in $\rn$.% We need it to apply Brezis-Lieb Lemma\\

As a consequence, the Brezis-Lieb Lemma yields that
\begin{equation}\label{BL}
\begin{aligned}
\|\nabla \psi^1_n\|^p_p&=\|\nabla u_n\|^p_p-\|\nabla u\|^p_p+o_n(1),\\
\|\psi^1_n\|^p_p&=\|u_n\|^p_p-\|u\|^p_p+o_n(1),\\
\|\psi^1_n\|^q_q&=\|u_n\|^q_q-\|u\|^q_q+o_n(1),\\
\|\psi^1_n\|^2_2&=\|u_n\|^2_2-\|u\|^2_2+o_n(1).
\end{aligned}    
\end{equation}
Using that $\psi^1_n\rightharpoonup 0$ in $W^{1,p}(\rn)\cap L^2(\rn)$, we have $\psi^1_n\to 0$ strongly in $L^s_{loc}(\rn)$ for any $s\in[p,p^*]$. As a consequence, due to the assumptions 
\ref{V1} and \ref{V3},
%$(V_1)$ and $(V_3)$, 
we have
\begin{equation}
\label{pot-to0}
\int_{\rn}V(x)|\psi^1_n|^pdx\to 0\qquad n\to\infty.    
\end{equation}
In fact, for any $\varepsilon>0$, there exists $R>0$ such that $|V(x)|<\varepsilon$ for $|x|>R$. As a consequence there exists $n_0(\varepsilon)>0$ such that, for any $n\ge n_0(\varepsilon)$ we have
\begin{equation}\notag
\begin{aligned}
\int_{\rn}V^+(x)|\psi^1_n|^p dx&=\int_{B_R(0)}V^+(x)|\psi^1_n|^p dx+ \int_{\rn\setminus B_R(0)}V^+(x)|\psi^1_n|^p dx \\
&<\|V^+\|_{q^+}\|\psi^1_n\|^2_{L^\frac{pq^+}{q^+-1}(B_R(0))}+\varepsilon\|\psi^1_n\|_p\le c\varepsilon.
\end{aligned}
\end{equation}
The term involving $V^-$ can be estimated similarly. As a consequence, using \eqref{pot-to0} along with the facts that $u_n$ is bounded in $W^{1,p}(\rn)\cap L^2(\rn)$ and $u_n\to u$ in $L^s_{loc}(\rn)$ for $s\in[p,p^*]$, the dominated convergence Theorem, \ref{V1} and \ref{V3}
%$(V_1)$ and $(V_3)$
yield that
\begin{equation}
\int_{\rn}V|\psi^1_n|^p dx=\int_{\rn}V|u_n|^p dx - \int_{\rn}V|u|^p dx +o_n(1).
\end{equation}
This fact, together with \eqref{BL}, yield that
\begin{equation}
\label{as-beh-J-psi}
I_\lambda(\psi^1_n)=J_{V,\lambda}(\psi^1_n)+o_n(1)=J_{V,\lambda}(u_n)-J_{V,\lambda}(u)+o_n(1).
\end{equation}

Moreover, we can see that
\begin{equation}
\label{as-beh-nabla-psi}
o_n(1)=\nabla J_{V,\lambda}(u_n)[\psi_n]-\nabla J_{V,\lambda}(u)[\psi_n]  =\nabla I_\lambda(\psi_n)[\psi_n]+o_n(1). 
\end{equation}
In order to prove \eqref{as-beh-nabla-psi}, the followings are sufficient to check:\\
\begin{align}
\int_{\rn}(|\nabla u_n|^{p-2}\nabla u_n-|\nabla u|^{p-2}\nabla u)\cdotp\nabla(u_n-u) dx=\int_{\rn}|\nabla\psi_n|^p dx+o_n(1),\label{grad:BL}\\
\int_{\rn}(1+V(x))(|u_n|^{p-2}u_n-|u|^{p-2}u)(u_n-u)dx=\int_{\rn}|\psi_n|^p dx+o_n(1),\label{V:BL}\\
\int_{\rn}(|u_n|^{q-2}u_n-|u|^{q-2}u)(u_n-u)dx=\int_{\rn}|\psi_n|^q dx+o_n(1)\label{psi-Lq}.
\end{align}
To check with~\eqref{grad:BL}, we have successively
\begin{align*}
   & \int_{\rn}(|\nabla u_n|^{p-2}\nabla u_n-|\nabla u|^{p-2}\nabla u)\cdotp\nabla(u_n-u)\, dx \\
    &= \int_{\rn}|\nabla u_n|^{p}\, - \int_{\rn}|\nabla u|^{p-2}\nabla u\cdot \nabla u_n \, dx - \int_{\rn}|\nabla u_n|^{p-2}\nabla u_n\cdot \nabla u \, dx +\int_{\rn}|\nabla u|^{p}\, dx\\
    &= \int_{\rn}|\nabla u_n|^{p} - \int_{\rn}|\nabla u|^{p} +o_n(1)
    - \int_{\rn}|\nabla u_n|^{p-2}\nabla u_n\cdot \nabla u \, dx +\int_{\rn}|\nabla u|^{p}\, dx\\
    &\stackrel{\eqref{BL}}{=}
    \int_{\rn}|\nabla\psi_n|^p - \int_{\rn}|\nabla u_n|^{p-2}\nabla u_n\cdot \nabla u \, dx +\int_{\rn}|\nabla u|^{p}\, dx+o_n(1),
\end{align*}
so then it remains to prove that as $n\to\infty$,
\[\int_{\rn}|\nabla u_n|^{p-2}\nabla u_n\cdot \nabla u \, dx \to \int_{\rn}|\nabla u|^{p}\, dx.\]
%We have the facts that $\nabla u_n\to\nabla u$ in $L^p_{\rm loc}(\rn)$, so in particular $\nabla u_n\to\nabla u$ pointwise a.e. in $\rn$. Then $|\nabla u_n(x)|^{p-2} \nabla u_n(x) \to |\nabla u(x)|^{p-2} \nabla u(x)$ pointwise a.e. in $\rn$. Note that $|\nabla u_n(x)|^{p-2} \nabla u_n(x)\in L^{p'}(\rn),\ p'=p/(p-1)$. 
For this purpose we can see that for any $R>0$
\begin{equation}
\label{est-nabla-J}
\begin{aligned}
    \int_{\rn}(|\nabla u_n|^{p-2}\nabla u_n-|\nabla u|^{p-2}\nabla u)\cdot \nabla u \, dx
    &=\int_{B_R(0)}(|\nabla u_n|^{p-2}\nabla u_n-|\nabla u|^{p-2}\nabla u)\cdot \nabla u \, dx\\
    &+\int_{\R^N\setminus B_R(0)}(|\nabla u_n|^{p-2}\nabla u_n-|\nabla u|^{p-2}\nabla u)\cdot \nabla u \, dx .%\\
    %&= \int_{\{|x|\le R\}}|\nabla u|^{p}\, dx +o_n(1)
    %+\int_{\{|x|>R\}}|\nabla u_n|^{p-2}\nabla u_n\cdot \nabla u \, dx .
\end{aligned}
\end{equation}
In order to show that the integral outside the ball is small, it is enough to observe that, for any $\varepsilon>0$, there exists $R_0(\varepsilon)>0$ such that
\begin{align}\notag
&\left|\int_{\R^N\setminus B_R(0)}(|\nabla u_n|^{p-2}\nabla u_n-|\nabla u|^{p-2}\nabla u)\cdot \nabla u  dx\right|\\
&\le\left(\int_{\R^N\setminus B_R(0)}|\nabla u|^p dx\right)^\frac{1}{p}\left(\int_{\R^N\setminus B_R(0)}|\nabla u_n|^p dx\right)^\frac{1}{p'}+\int_{\R^N\setminus B_R(0)}|\nabla u|^p dx<\varepsilon
\end{align}
for any $n$ provided $R\ge R_0(\varepsilon)$, since $\nabla u_n$ is bounded in $L^p(\R^N)$, which yields that $|\nabla u_n|^{p-2}\nabla u_n$ is bounded in $L^{p'}(\rn)$, where $p'=p/(p-1)$. Moreover, recalling the fact that $\nabla u_n\to\nabla u$ pointwise a.e. in $\rn$, we can see that for any $\varepsilon>0$ there exists $n_0(\varepsilon)>0$ such that 
$$\left|\int_{B_R(0)}(|\nabla u_n|^{p-2}\nabla u_n-|\nabla u|^{p-2}\nabla u)\cdotp\nabla u \, dx\right|<\varepsilon\qquad\forall\,n>n_0(\varepsilon).$$
\\

Relations \eqref{psi-Lq} and \eqref{V:BL} are proved similarly.\\

If $\psi^1_n\to 0$ strongly in $W^{1,p}(\rn)\cap L^2(\rn)$ there is nothing to prove, otherwise it is possible to show that there exists a sequence $\{y^1_n\}\subset\rn$ and a solution $w^1\ne 0$ to the limit equation \eqref{lim-eq} such that $|y^1_n|\to\infty$ and $\psi^1_n(\cdotp+y^1_n)\rightharpoonup w^1$ weakly in $X$.\\

In order to prove such a claim, first we note that there exists $\alpha>0$ such that
\begin{equation}
\label{low-est-I-psi}
I_\lambda(\psi_n^1)\ge \alpha\qquad\forall\,n\in\mathbb{N}. 
\end{equation}
In fact, if we assume by contradiction that, up to a subsequence, $I_\lambda(\psi^1_n)\to 0$, then \eqref{as-beh-nabla-psi} gives
$$I_\lambda(\psi^1_n)=\left(\frac{1}{p}-\frac{1}{q}\right)\|\psi^1_n\|^p+\lambda\left(\frac{1}{2}-\frac{1}{q}\right)\|\psi^1_n\|_2^2=o_n(1),$$
which yields that $\psi^1_n\to 0$ in $W^{1,p}(\rn)\cap L^2(\rn)$, a contradiction.\\

As a consequence, splitting $\rn$ into the countable union of unit cubes centered at integer points and setting $$d_n:=\max_{i}\|\psi^1_n\|_{L^q(Q_i)},$$
we have 
\begin{equation}
\label{dn-unif-pos}
d_n\ge \kappa \qquad\forall\,n\in\mathbb{N},
\end{equation}
for some suitable constant $\kappa>0$. In fact, using \eqref{low-est-I-psi} and \eqref{as-beh-nabla-psi}, we can see that
\begin{equation}\notag
\begin{aligned}
\frac{\alpha}{2}&\le I_\lambda(\psi_n)+o(1)=-\left(\frac{1}{2}-\frac{1}{p}\right)\|\psi^1_n\|^p+\left(\frac{1}{2}-\frac{1}{q}\right)\|\psi^1_n\|_q^q\\
&\le\left(\frac{1}{2}-\frac{1}{q}\right)\|\psi^1_n\|_q^q\le\left(\frac{1}{2}-\frac{1}{q}\right)d_n^{q-p}\sum_{i}\|\psi^1_n\|_{L^q(Q_i)}^p\\
&\le S(p,q)^{-1}\left(\frac{1}{2}-\frac{1}{q}\right)d_n^{q-p}\sum_{i}\|\psi^1_n\|^p_{W^{1,p}(Q_i)}\le\tilde{c}(p,q)d_n^{p-q},
\end{aligned}
\end{equation}
which proves \eqref{dn-unif-pos}.\\

Let $y^1_n$ be the centre of the cube $Q_i$ which achieves the maximum $d_n$. If $y^1_n$ were bounded, then it would be constant (up to a subsequence), which means that there exists $j$ such that
$$\|\psi^1_n\|^q_{W^{1,p}(Q_j)}\ge c\|\psi^1_n\|^q_{L^q(Q_j)}\ge \kappa>0,$$
which is impossible since $\psi^1_n\rightharpoonup0$ in $W^{1,p}(\rn)\cap L^2(\rn)$, which yields that $\psi^1_n\to 0$ in $L^{p}_{loc}(\rn)$. As a consequence $|y^1_n|\to\infty$ and $\psi^1_n(\cdotp+y^1_n)\rightharpoonup w^1$ in $W^{1,p}(\rn)\cap L^2(\rn)$, where $w^1\ne 0$ is a solution to the limit equation \eqref{lim-eq}. We stress that, for any $\rho\in(0,\rho_0)$, there exists $M=M_\rho>0$ such that
\begin{equation}\label{low-est-wj}
\|w\|  \ge M,
\end{equation}
for any solution $w\in W^{1,p}(\rn)\cap L^2(\rn)$ to \eqref{lim-eq} with $\|w\|_2\le \rho$. In fact, any solution $w\in W^{1,p}(\rn)\cap L^2(\rn)$ with $\|w\|_2\le \rho$ to \eqref{lim-eq} fulfills
\begin{equation*}
\begin{aligned}
&I(w)=\frac{1}{p}\|w\|^p-\frac{1}{q}\|w\|^q_q\ge I(u_{\|w\|_2})=c_{\|w\|_2}\ge c_\rho,\\
&\|w\|^p+\lambda\|w\|^2_2=\|w\|^q_q,
\end{aligned}
\end{equation*}
which give
$$\left(\frac{1}{p}-\frac{1}{q}\right)\|w\|^p\ge\left(\frac{1}{p}-\frac{1}{q}\right)\|w\|^p-\frac{\lambda}{q}\|w\|^q_q\ge c_\rho,$$
which gives \eqref{low-est-wj}.\\

Iterating this process, setting, for $j\ge 2$, $\psi^j_n:=\psi^{j-1}_n(\cdotp+y^{j-1}_n)-w^{j-1}$, it is possible to find a sequence $y^j_n$ such that $|y^j_n|\to\infty$, $|y^j_n-y^i_n|\to\infty$ for $1\le i<j$ and $\psi^j_n(\cdotp+y^j_n)\rightharpoonup w^j$ in $W^{1,p}(\rn)\cap L^2(\rn)$, where $w^j$ is a solution to \eqref{lim-eq} and
\begin{equation}\notag
\begin{aligned}
\|\nabla \psi^j_n\|^p_p&=\|\nabla \psi^{j-1}_n\|^p_p-\|\nabla w^{j-1}\|^p_p+o_n(1),\\
\|\psi^j_n\|^p_p&=\|\psi^{j-1}_n\|^p_p-\|w^j\|^p_p+o_n(1),\\
\|\psi^j_n\|^q_q&=\|\psi^{j-1}_n\|^q_q-\|w^j\|^q_q+o_n(1),\\
\|\psi^j_n\|^2_2&=\|\psi^{j-1}_n\|^2_2-\|w^j\|^2_2+o_n(1).
\end{aligned}    
\end{equation}
In particular, we have
\begin{equation}\notag
\begin{aligned}
\|\psi^j_n\|^2_2&=\|u_n\|^2_2-\|u\|^2_2-\sum_{i=1}^j\|w^i\|^2_2+o_n(1),\\
I_\lambda(\psi^j_n)&=J_{V,\lambda}(u_n)-J_{V,\lambda}(u)-\sum_{i=1}^{j-1}J_{V,\lambda}(u_n)+o_n(1) .
\end{aligned}
\end{equation}
Since $w^j$ satisfies \eqref{low-est-wj} for any $j\ge 1$, the iterative process must stop after a finite number of steps, that is either $k=0$ or there exists $k\ge 1$ such that $w^j\ne 0$ for $1\le j\le k$ and $w^j=0$ for $j>k$.
\end{proof}

\subsection{Proof of Theorem \ref{th-Vle0}.}\label{subsec-bdPS}
Let us denote, for $k>0$,
\begin{align}
\alpha_k:=\sup_{u\in D_k}J_V(u);\quad \beta_k:=\inf_{u\in \partial D_k}J_V(u),\\
D_k:= \{u\in \mathcal{S}_{\rho}:\  \|u\|\le k\},
\end{align}
where $\|\cdotp\|$ denotes the standard $W^{1,p}(\rn)$-norm.
\begin{lemma}
\label{lemma-MP}
Assume that $V\in L^{N/p}(\rn)$, and $\|V\|_{\frac{N}{p}}<S_p$. Then for $\frac{N+r}{N}p<q<p^*$, there exists $0<k_1<k_2$ such that
\begin{align}\label{eq:lev:1}
0<\alpha_{k_1}<\beta_{k_2}.
\end{align}
and $J_V(u)>0$ for any $u\in D_{k_1}\setminus\{0\}$.
\end{lemma}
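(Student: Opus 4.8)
The plan is to reduce \eqref{eq:lev:1} to an elementary one-dimensional mountain-pass picture. The key point will be that on $\mathcal{S}_\rho$ the functional $J_V$ is bounded below by $g(\|u\|)$ for a suitable real function $g$ that vanishes at $0$, is strictly positive on an interval $(0,\tau_0)$, attains there a positive maximum $m$, and becomes negative afterwards; once this is established, one chooses $k_1$ small and $k_2$ at the maximum point of $g$.

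First I would estimate the two lower-order terms of $J_V$ on $\mathcal{S}_\rho$. For the potential term, Hölder's inequality and the Sobolev embedding $D^{1,p}(\rn)\subset L^{p^*}(\rn)$ give
$$\int_{\rn}V(x)|u|^p\,dx\ \ge\ -\|V\|_{N/p}\,\|u\|_{p^*}^p\ \ge\ -\|V\|_{N/p}S_p^{-1}\|u\|^p,$$
so that $\frac1p\|u\|^p+\frac1p\int_{\rn}V|u|^p\,dx\ge\frac{\delta}{p}\|u\|^p$ with $\delta:=1-\|V\|_{N/p}S_p^{-1}>0$, using the hypothesis $\|V\|_{N/p}<S_p$. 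For the nonlinear term, the Gagliardo--Nirenberg inequality \eqref{eq:GN:1} (with the exponent $\theta$ defined there), together with $\|\nabla u\|_p\le\|u\|$ and $\|u\|_2=\rho$, yields $\|u\|_q^q\le B\|u\|^{\theta q}$ on $\mathcal{S}_\rho$ with $B:=C^q\rho^{(1-\theta)q}/q$. Hence $J_V(u)\ge g(\|u\|)$, where $g(t):=\frac{\delta}{p}t^p-Bt^{\theta q}$. The crucial structural fact is that the mass-supercriticality $q>\frac{N+2}{N}p$ is exactly equivalent to $\theta q>p$; therefore $g(t)=t^p\big(\frac{\delta}{p}-Bt^{\theta q-p}\big)$ is positive on $(0,\tau_0)$ and negative on $(\tau_0,\infty)$, where $\tau_0:=\big(\delta/(pB)\big)^{1/(\theta q-p)}$, and it attains a maximum $m:=\max_{t\ge0}g(t)>0$ at some $t^*\in(0,\tau_0)$.

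Next I would record that $D_k\ne\emptyset$ for every $k>0$: fixing $\phi\in C_c^\infty(\rn)$ with $\|\phi\|_2=\rho$ and setting $\phi_t(x):=t^{N/2}\phi(tx)$, one has $\|\phi_t\|_2=\rho$ for all $t>0$, while $\|\nabla\phi_t\|_p^p$ and $\|\phi_t\|_p^p$ are positive powers of $t$ (here $p>2$ is used) and hence tend to $0$ as $t\to0^+$, so $\phi_t\in D_k$ for $t$ small. Then set $k_2:=t^*$: every $u\in\partial D_{k_2}$ has $\|u\|=k_2$, so $\beta_{k_2}\ge g(k_2)=m>0$. On the other hand, dropping the negative term $-\frac1q\|u\|_q^q$ and using Hölder and Sobolev once more, $J_V(u)\le\frac1p\big(1+\|V\|_{N/p}S_p^{-1}\big)\|u\|^p$ for every $u$, whence $\alpha_{k_1}\le Ak_1^p<\infty$ with $A:=\frac1p\big(1+\|V\|_{N/p}S_p^{-1}\big)$. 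Finally choose any $k_1\in\big(0,\min\{k_2,\tau_0,(m/A)^{1/p}\}\big)$. Then $0<\|u\|\le k_1<\tau_0$ for $u\in D_{k_1}$ forces $J_V(u)\ge g(\|u\|)>0$, which gives both $J_V>0$ on $D_{k_1}\setminus\{0\}$ and, $D_{k_1}$ being nonempty, $\alpha_{k_1}>0$; moreover $\alpha_{k_1}\le Ak_1^p<m\le\beta_{k_2}$, which is \eqref{eq:lev:1}.

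The computation is almost entirely routine once the two structural inputs are isolated: the smallness $\|V\|_{N/p}<S_p$, which keeps the leading part of $J_V$ coercive on bounded subsets of $W^{1,p}(\rn)$, and the comparison $\theta q>p$ coming from $q>\frac{N+2}{N}p$, which makes the nonlinear term negligible near the origin but dominant far out. The only step deserving some care is the nonemptiness of $D_k$ for all small $k$, needed so that $\alpha_{k_1}$ is a supremum over a nonempty set and is therefore strictly positive; this is guaranteed by the $L^2$-invariant rescaling above, and no compactness is required at this stage.
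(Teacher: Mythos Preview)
Your argument is correct and follows essentially the same route as the paper: bound the potential term by H\"older and Sobolev, bound the nonlinear term by Gagliardo--Nirenberg, reduce to a one-variable function $g(t)=\frac{\delta}{p}t^p-Bt^{\theta q}$ with $\theta q>p$, place $k_2$ at its maximizer, and then take $k_1$ small. The only differences are cosmetic: the paper uses the standing assumption $V\le 0$ to get the simpler upper bound $J_V(u)\le\frac1p\|u\|^p$, whereas you use the two-sided H\"older/Sobolev estimate and obtain the constant $A=\frac1p(1+S_p^{-1}\|V\|_{N/p})$; and you add the explicit verification that $D_k\ne\emptyset$ via the $L^2$-preserving scaling, which the paper leaves implicit.
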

\begin{proof}
By the H\"{o}lder inequality and the Sobolev embedding $D^{1,p}(\rn)\subset L^{p^*}(\rn)$%\footnote{\raj{We note that $\|\cdot\|$ is the standard $W^{1,p}(\rn)$-norm}.}
, we have
\begin{equation}\notag
\left|\int_{\rn}V(x)|u|^pdx\right|\le\|V\|_{\frac{N}{p}}\|u\|^p_{p^*}\le S_p^{-1}\|V\|_{\frac{N}{p}}\|\nabla u\|^p_p\le S_p^{-1}\|V\|_{\frac{N}{p}}\|u\|^p\qquad\forall\,u\in \mathcal{S}_{\rho}.
\end{equation}
As a consequence, since $V\le 0$ we have %using that $\frac{N+r}{N}p<q$, for any $\delta>0$, there exists $k_0=k_0(\delta,p,q,r,\rho,N)>0$ such that, for any $0<k<k_0$, we have
\begin{equation}\label{low-est-JV}
\begin{aligned}
J_V(u)&\ge \frac{1}{p}(1-S_p^{-1}\|V\|_{\frac{N}{p}})\|u\|^p%+ \frac{\inf_{\rn} V}{p}\int_{\rn}|u|^p\, dx
- \frac{1}{q}\|u\|_q^q\\
    &\stackrel{\eqref{eq:GN:1}}{\ge} \frac{1}{p}(1-S_p^{-1}\|V\|_{\frac{N}{p}})\|u\|^p
    %+ \frac{\inf_{\rn} V}{p}\int_{\rn}|u|^p\, dx
    - \frac{C^q}{q}\|\nabla u\|_p^{\theta q}\|u\|_2^{(1-\theta)q}\\
    &\ge \frac{1}{p}(1-S_p^{-1}\|V\|_{\frac{N}{p}})\|u\|^p-\frac{C^q}{q}\rho^{(1-\theta)q}\|u\|^{\theta q}=f(\|u\|),\qquad \forall\, u\in \mathcal{S}_{\rho},
\end{aligned}
\end{equation}
where we have set $$f(t):=\frac{1}{p}(1-S_p^{-1}\|V\|_{\frac{N}{p}})t^p-\frac{C^q}{q}\rho^{(1-\theta)q}t^{\theta q}.$$\\

We note that $f(0)=0$. Using that $1-S_p^{-1}\|V\|_{\frac{N}{p}}>0$, thanks to assumption (\ref{est-Vle0}), and $\theta q>p$, since $p\frac{N+2}{N}<q<p^*$, we can see that there exists a unique $t_0=t_0(p,q,N,\rho,\|V\|_{\frac{N}{p}})>0$ such that $$f(t_0)=\max_{t\in(0,\infty)}f(t)>0,$$ 
$f$ is strictly increasing in $(0,t_0)$ and strictly decreasing in $(t_0,\infty)$.\\

As a consequence, (\ref{low-est-JV}) yields that, setting $k_2=t_0$, we have $\beta_{k_2}\ge f(t_0)>0$ and $J_V(u)\ge f(\|u\|)>0$ for any $u\in D_{k_1}\setminus\{0\}$.

On the other hand, since $V\le 0$, taking $k_1<\min\{(pf(t_0))^{\frac{1}{p}},k_2\}$, we have
$$0\le J_V(u)\le \frac{1}{p}\|u\|^p\le\frac{k_1^p}{p}<f(t_0)\le\beta_{k_2}\qquad\forall\,u\in D_{k_1},$$
which concludes the proof.
\end{proof}
Lemma \ref{lemma-MP} can be used to prove that $J_V$ has the mountain pass geometry. More precisely, for $t>0$ and $u\in H^1(\rn)$, %setting $\kappa:=\min\{K_\rho,k_1\}>0$, where $K_\rho$ is defined in the statement of Theorem \ref{thWLZL} and $k_2$ is determined in Lemma \ref{lemma-MP}
we define the scaling $u^t(x):=t^{\frac{N}{2}}u(tx)$ and we introduce the mountain-pass level
\begin{equation}
m_{V,\rho}:=\inf_{\gamma\in \Gamma_\rho}\max_{t\in[0,1]}J_V(\gamma(t)),
\end{equation}
where $\Gamma_\rho$ is the set of paths
$$\Gamma_\rho:=\{\gamma\in C([0,1],\mathcal{S}_\rho):\,\gamma(0)=u_\rho^{t_1},\,\gamma(1)=u_\rho^{t_2}\},$$
with $0<t_1<1<t_2<\infty$ such that $u_\rho^{t_1}\in D_{k_1}$ and $J_V(u_\rho^{t_2})<0$. We note that this is possible for $p>2$, since
$$\|u^t\|_p^p=t^{\frac{N(p-2)}{2}}\|u\|_p^p\to 0\qquad t\to 0^+$$
provided $p>2$.
\begin{lemma}\label{lemma-mpg}
Assume that $V\in L^{\frac{N}{p}}(\rn)$ and $\|V\|_{\frac{N}{p}}<S_p$. Then, in the above notations, we have
$$0<\sup_{\gamma\in\Gamma_\rho}\max\{J_V(\gamma(0)),\,J_V(\gamma(1))\}<m_{V,\rho}.$$
Moreover, if $V\le 0,\,V\ne 0$, we have $m_{V,\rho}<c_\rho$.
\end{lemma}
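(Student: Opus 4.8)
The first inequality is essentially a consequence of Lemma \ref{lemma-MP}. Every $\gamma\in\Gamma_\rho$ has $\gamma(0)=u_\rho^{t_1}\in D_{k_1}$, hence $0<J_V(\gamma(0))\le\alpha_{k_1}$ by Lemma \ref{lemma-MP}, and $\gamma(1)=u_\rho^{t_2}$ with $J_V(\gamma(1))<0$, so $\max\{J_V(\gamma(0)),J_V(\gamma(1))\}=J_V(\gamma(0))\in(0,\alpha_{k_1}]$, uniformly in $\gamma$. To bound $m_{V,\rho}$ from below I would show that every such $\gamma$ must meet $\partial D_{k_2}$: the estimate in the proof of Lemma \ref{lemma-MP} gives $J_V(u)\ge f(\|u\|)>0$ for every $u\in\mathcal{S}_{\rho}$ with $0<\|u\|\le k_2$ (recall $f$ is strictly increasing on $(0,t_0)=(0,k_2)$ with $f(0)=0$), so $J_V(u_\rho^{t_2})<0$ forces $\|u_\rho^{t_2}\|>k_2$; since $\|\gamma(0)\|\le k_1<k_2$, the intermediate value theorem applied to $s\mapsto\|\gamma(s)\|$ produces $\bar s$ with $\gamma(\bar s)\in\partial D_{k_2}$, whence $\max_{[0,1]}J_V\circ\gamma\ge\beta_{k_2}$. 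Taking the infimum over $\gamma$ gives $m_{V,\rho}\ge\beta_{k_2}$, and \eqref{eq:lev:1} then yields $0<\sup_{\gamma}\max\{J_V(\gamma(0)),J_V(\gamma(1))\}\le\alpha_{k_1}<\beta_{k_2}\le m_{V,\rho}$.

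For $m_{V,\rho}<c_\rho$ in the case $V\le0$, $V\ne0$, the plan is to estimate $m_{V,\rho}$ along the scaling path of the least-energy solution $u_\rho$ of the limit problem. With $u_\rho^t(x)=t^{N/2}u_\rho(tx)$ and $b:=\tfrac{N(p-2)}{2}$, a change of variables gives
\[
I(u_\rho^t)=\frac{t^{b+p}}{p}\|\nabla u_\rho\|_p^p+\frac{t^{b}}{p}\|u_\rho\|_p^p-\frac{t^{N(q-2)/2}}{q}\|u_\rho\|_q^q=:h(t),
\]
so that $J_V(u_\rho^t)=h(t)+\tfrac{t^{b}}{p}\int_{\rn}V(x/t)|u_\rho(x)|^p\,dx\le h(t)$, the inequality because $V\le0$ (and $b>0$ exactly because $p>2$, which is also what forces $\|u_\rho^t\|\to0$ as $t\to0^+$). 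Since $p\tfrac{N+2}{N}<q<p^*$ one checks $\tfrac{N(q-2)}{2}>b+p>b>0$, hence $h(t)\to0$ as $t\to0^+$ and $h(t),J_V(u_\rho^t)\to-\infty$ as $t\to\infty$.

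Next I would prove that $\max_{t>0}h(t)=h(1)=c_\rho$, attained only at $t=1$. Factoring $t^{b-1}$ out of $h'$ leaves a bracket $\Phi(t)$ which is a positive constant times $t^{p}$, plus a positive constant, minus a positive constant times $t^{N(q-p)/2}$; since $\tfrac{N(q-p)}{2}>p$, $\Phi$ is positive at $0$, has a single critical point (a maximum), and tends to $-\infty$, so it vanishes at exactly one point, and because $\Phi(1)$ coincides with the Pohozaev functional $P(u_\rho)$, which is zero as $u_\rho\in\mathcal{P}_\rho$ (Remark \ref{rem-least-energy-sol}), that point is $t=1$. Therefore $h$ increases on $(0,1)$, decreases on $(1,\infty)$, and $h(1)=\tfrac1p\|u_\rho\|^p-\tfrac1q\|u_\rho\|_q^q=I(u_\rho)=c_\rho$. (This monotonicity of the fiber map is also implicit in \cite{WLZL}.)

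Finally, choose $t_1$ small enough that $\|u_\rho^{t_1}\|\le k_1$ and $t_2$ large enough that $J_V(u_\rho^{t_2})<0$, with $0<t_1<1<t_2$; then $\gamma(s):=u_\rho^{(1-s)t_1+st_2}$ belongs to $\Gamma_\rho$, since $\|\gamma(s)\|_2=\|u_\rho\|_2=\rho$ and $s\mapsto J_V(\gamma(s))$ is continuous ($t\mapsto u_\rho^t$ is continuous into $W^{1,p}(\rn)\cap L^2(\rn)$ and $V\in L^{N/p}(\rn)$). The maximum of $J_V$ along $\gamma$ is attained at some $\bar t\in[t_1,t_2]$: if $\bar t\ne1$ then $J_V(u_\rho^{\bar t})\le h(\bar t)<h(1)=c_\rho$; if $\bar t=1$ then $J_V(u_\rho)=c_\rho+\tfrac1p\int_{\rn}V|u_\rho|^p\,dx<c_\rho$ because $V\le0$, $V\ne0$ and $u_\rho>0$. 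In either case $m_{V,\rho}\le\max_{s\in[0,1]}J_V(\gamma(s))<c_\rho$. I expect the main obstacle to be the analysis of $h$, namely proving that $t=1$ is the \emph{strict} global maximum using the Pohozaev identity and the ordering of the scaling exponents, together with the case split at $\bar t$, where the strict sign $\int_{\rn}V|u_\rho|^p<0$ is exactly what upgrades $J_V(u_\rho^t)\le h(t)$ to a strict inequality at $t=1$.
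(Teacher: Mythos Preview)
Your proof is correct and follows essentially the same route as the paper: the first inequality via the intermediate-value argument that every admissible path must cross $\partial D_{k_2}$, and the strict inequality $m_{V,\rho}<c_\rho$ via the scaling path $\tau\mapsto u_\rho^{(1-\tau)t_1+\tau t_2}$ together with $c_\rho=\max_{t>0}I(u_\rho^t)$. The only difference is emphasis: you carefully justify that $t=1$ is the unique strict maximiser of $h$ via the Pohozaev identity and then case-split on $\bar t$, whereas the paper simply records $c_\rho=\max_{t>0}I(u_\rho^t)$ and concludes $\max_{t>0}J_V(u_\rho^t)<\max_{t>0}I(u_\rho^t)$ in one stroke (which is legitimate since $u_\rho>0$ and $V\le0,\,V\ne0$ make $J_V(u_\rho^t)<I(u_\rho^t)$ strictly for \emph{every} $t>0$, so your case split is not actually needed).
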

\begin{proof}
By definition of $\Gamma_\rho$, the definition of $\sup$, Lemma \ref{lemma-MP} and \eqref{low-est-JV}, we have $$\sup_{\gamma\in\Gamma_\rho}\max\{J_V(\gamma(0)),\,J_V(\gamma(1))\}=\max\{J_V(u_\rho^{t_1}),\,J_V(u_\rho^{t_2})\}=J_V(u_\rho^{t_1})>0\qquad\forall\,g\in \Gamma_\rho.$$ Now we fix $\gamma\in\Gamma_\rho$. Since $J_V(\gamma(1))<0$, Lemma \ref{lemma-MP} yields that $\|\gamma(1)\|>k_2$. Being $\|\gamma(0)\|\le k_1<k_2$, by continuity there exists $\bar{t}\in(0,1)$ such that $\|\gamma(\bar{t})\|=k_2$, hence $$\max_{t\in[0,1]} J_V(\gamma(t))\ge J_V(\gamma(\bar{t}))\ge\beta_{k_2}>\alpha_{k_1}\ge J_V(\gamma(0))=\max\{J_V(\gamma(0)),J_V(\gamma(1))\}.$$  
Since $\gamma$ is arbitrary, this yields that $$m_{V,\rho}\ge \beta_{k_2}>\alpha_{k_1}\ge\sup_{\gamma\in\Gamma_\rho}\max\{J_V(\gamma(0)),J_V(\gamma(1))\}.$$
In order to prove that $m_{V,\rho}<c_\rho$ if $V\ne 0$, we note that the path $$\tau\mapsto u_\rho^{(1-\tau)t_1+\tau t_2}$$
is in $\Gamma_\rho$. %provided we choose $t_1>0$ small enough and $t_2>0$ large enough. In fact 
%$$\|u_\rho^t\|_p^p=t^{\frac{N(p-2)}{2}}\|u_\rho\|_p^p\to 0,\qquad\|\nabla u_\rho^t\|^p_p=t^{\frac{p(N+2)-2N}{2}}\|\nabla u_\rho\|_p^p\to 0$$
%as $t\to 0^+$. 
Moreover, we have $$c_\rho=I(u_\rho)=\max_{t>0}I(u_\rho^t)=\max_{t>0}\left(\frac{\|\nabla u_\rho\|_p^p}{p}t^{\frac{p(N+2)-2N}{2}}+\frac{\|u_\rho\|_p^p}{p}t^{\frac{N(p-2)}{2}}-\frac{\|u_\rho\|_q^q}{q}t^{\frac{N(q-2)}{2}}\right).$$
As a consequence
$$m_{V,\rho}\le\max_{\tau\in[0,1]} J_V(u_\rho^{(1-\tau)t_1+\tau t_2})\le \max_{t>0} J_V(u_\rho^t)<\max_{t>0}I(u^t_\rho)=c_\rho,$$
because $V\le 0,\, V\ne 0$.
\end{proof}
\begin{lemma}[Pohozaev identity]
\label{lemma-Poho}
Assume that $V\in L^{\frac{N}{p}}(\rn)$, $\tilde{W}\in L^{\frac{N}{p-1}}(\rn)$. Let $u\in \mathcal{S}_\rho$ be a solution to problem (\ref{eq:EL:1}). Then it satisfies the Pohozaev identity
\begin{equation}\label{Pohozaev-vle0}
\begin{aligned}
P_V(u):=&\frac{p(N+2)-2N}{2p}\|\nabla u\|^p_p+\frac{N(p-2)}{2p}\|u\|_p^p-\frac{N(q-2)}{2q}\|u\|_q^q \\
&+\int_{\rn}V(x)\left(\frac{Np}{4}|u|^p+\frac{p}{2}|u|^{p-2}u\nabla u\cdotp x\right) dx=0 .
\end{aligned}
\end{equation}
\end{lemma}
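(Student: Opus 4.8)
The route I would take is the one dictated by the structure of $P_V$: it asserts that the energy is stationary along the $L^2$-preserving dilation. For $t>0$ put $u^t(x):=t^{N/2}u(tx)$, so that $\|u^t\|_2=\|u\|_2=\rho$ for every $t$ and $u^1=u$. Since $u$ solves \eqref{eq:EL:1}, it is a free critical point in $W^{1,p}(\rn)\cap L^2(\rn)$ of $v\mapsto J_V(v)+\tfrac{\lambda}{2}\|v\|_2^2$, whose Euler--Lagrange equation is precisely the PDE in \eqref{eq:EL:1}; as $t\mapsto u^t$ is a curve on $\mathcal S_\rho$ through $u$, one expects $\tfrac{d}{dt}\big|_{t=1}J_V(u^t)=0$, and $P_V(u)=0$ is exactly this relation written out. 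An equivalent viewpoint is to test the equation against $x\cdot\nabla u$ and add $\tfrac N2$ times the Nehari identity $\|\nabla u\|_p^p+\|u\|_p^p+\int_{\rn}V|u|^p\,dx+\lambda\rho^2=\|u\|_q^q$ (obtained by testing against $u$), which is what cancels the $\lambda\rho^2$ term and leaves exactly the potential contribution of \eqref{Pohozaev-vle0}.

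For the explicit computation, a change of variables gives
\[
J_V(u^t)=\tfrac1p\,t^{\frac{p(N+2)}2-N}\|\nabla u\|_p^p+\tfrac1p\,t^{\frac{pN}2-N}\|u\|_p^p+\tfrac1p\,t^{\frac{pN}2}G(t)-\tfrac1q\,t^{\frac{qN}2-N}\|u\|_q^q,\qquad G(t):=\int_{\rn}V(z)|u(tz)|^p\,dz .
\]
Every term but the third is manifestly smooth in $t$, so the only point is that $G\in C^1$ near $t=1$. Writing $\tfrac{G(t+h)-G(t)}{h}=\tfrac ph\int_t^{t+h}\Phi(s)\,ds$ with $\Phi(s):=\int_{\rn}V(z)\,|u(sz)|^{p-2}u(sz)\,(\nabla u)(sz)\cdot z\,dz$, it suffices to check that $\Phi$ is finite and continuous near $1$. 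Finiteness follows from H\"older applied with the three exponents $\tfrac{N}{p-1},\,\tfrac{p^*}{p-1},\,p$ (whose reciprocals add to $1$), using $|V(z)||z|=|\tilde W(z)|$, so that $|\Phi(s)|\le\|\tilde W\|_{N/(p-1)}\|u(s\cdot)\|_{p^*}^{p-1}\|(\nabla u)(s\cdot)\|_p<\infty$; continuity then follows from the strong continuity of dilations on $L^{p^*}(\rn)$ and $L^p(\rn)$ and of $v\mapsto|v|^{p-2}v$ from $L^{p^*}(\rn)$ to $L^{p^*/(p-1)}(\rn)$. Hence $G\in C^1$ with $G'(1)=p\int_{\rn}V|u|^{p-2}u\,\nabla u\cdot x\,dx$, and differentiating $J_V(u^t)$ at $t=1$ and collecting coefficients reproduces the left-hand side of \eqref{Pohozaev-vle0}. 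The assumption $\tilde W\in L^{N/(p-1)}(\rn)$ is precisely what makes the term $\int_{\rn}V|u|^{p-2}u\,\nabla u\cdot x\,dx$ in \eqref{Pohozaev-vle0} meaningful.

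The genuine difficulty is to justify rigorously that $\tfrac{d}{dt}\big|_{t=1}J_V(u^t)=0$. Formally this equals $\big\langle J_V'(u),\,\tfrac N2u+x\cdot\nabla u\big\rangle=-\lambda\int_{\rn}u\big(\tfrac N2u+x\cdot\nabla u\big)dx=0$, but $\tfrac N2u+x\cdot\nabla u$ need not belong to $W^{1,p}(\rn)\cap L^2(\rn)$, so this pairing is not directly available, and the naive integration by parts against $x\cdot\nabla u$ for the $p$-Laplacian term is not a priori licit. I would proceed by the standard regularisation: test the weak form of \eqref{eq:EL:1} against $\varphi_R(x)\,(x\cdot\nabla u)$, with $\varphi_R(x)=\varphi(x/R)$ the radial cutoff already used in the proof of the splitting Lemma \ref{splitting-lemma}, integrate by parts — handling the principal part via $u\in C^{1,\alpha}_{\rm loc}(\rn)$ from regularity theory for $\Delta_p$, or via the Pucci--Serrin variational identity, which requires only $u\in C^1$ — and let $R\to\infty$; the boundary and remainder terms vanish thanks to the decay estimates for positive $p$-Laplacian solutions (of the kind proved in the Appendix) together with the integrability of $V$ and $\tilde W$. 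Adding $\tfrac N2$ times the Nehari identity then removes $\lambda\rho^2$ and yields \eqref{Pohozaev-vle0}. This careful integration by parts and the control of the tail terms is where essentially all the work lies.
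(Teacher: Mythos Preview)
Your approach is essentially identical to the paper's: both compute $\frac{d}{dt}\big|_{t=1}J_V(u^t)$ for the $L^2$-preserving dilation $u^t(x)=t^{N/2}u(tx)$ and argue that this derivative vanishes because $u$ is a constrained critical point on $\mathcal S_\rho$. You are in fact more careful than the paper, which simply writes $\frac{d}{dt}\big|_{t=1}J_V(u^t)=\nabla J_V(u)[\partial_t u^t|_{t=1}]$ without addressing whether $\partial_t u^t|_{t=1}=\tfrac{N}{2}u+x\cdot\nabla u$ is an admissible test function --- precisely the issue you flag and propose to handle via the cutoff regularisation.
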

\begin{proof}
On the one hand we note that, for any $u\in W^{1,p}(\rn)\cap L^2(\rn)$, an explicit computation shows that
$$\frac{d}{dt}\bigg|_{t=1}J_V(u^t)=P(u).$$
On the other hand, if $u\in W^{1,p}(\rn)\cap L^2(\rn)$ is a solution to\eqref{eq:EL:1}, then
$$\frac{d}{dt}\bigg|_{t=1}J_V(u^t)=\nabla J_V(u)[\partial_t u^t|_{t=1}]=-\lambda\int_{\rn}u(\partial_t u^t|_{t=1})=-\lambda\frac{d}{dt}\left(\int_{\rn} u^2 dx\right)=0,$$
so that $P_V(u)=0$.
\end{proof}
We note that $P_V$ reduces to $P$ if $V=0$ (see Remark \ref{rem-least-energy-sol}).
\begin{lemma}
\label{lemma-en>0}
Assume that (\ref{est-Vle0}) is fulfilled. Let $u\in \mathcal{S}_{\rho}$ be a solution to 
\begin{equation}
\label{main-eq-not-normalised}
-\Delta_p u+(1+V(x))|u|^{p-2}u+\lambda u=|u|^{q-2}u\qquad\text{in}\,\rn
\end{equation}
Then $J_V(u)> 0$.
\end{lemma}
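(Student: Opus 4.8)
The plan is to combine the Nehari-type identity coming from testing the equation with $u$ and the Pohozaev identity \eqref{Pohozaev-vle0} in order to eliminate the $\|u\|_q^q$ term and obtain a lower bound for $J_V(u)$ in terms of positive quantities. First I would record the two identities available for a solution $u\in\mathcal S_\rho$ of \eqref{main-eq-not-normalised}: testing with $u$ gives
\begin{equation}\notag
\|\nabla u\|_p^p+\|u\|_p^p+\int_{\rn}V(x)|u|^p\,dx+\lambda\|u\|_2^2=\|u\|_q^q,
\end{equation}
and Lemma \ref{lemma-Poho} gives $P_V(u)=0$. The strategy is then to write $J_V(u)$ as an appropriate linear combination $J_V(u)=a\cdot(\text{Nehari})+b\cdot(\text{Pohozaev})+(\text{rest})$, choosing $a,b$ so that the coefficient of $\|u\|_q^q$ vanishes; since $\lambda>0$ (this is part of what Theorem \ref{th-Vle0} asserts, and at this point in the argument $u$ is a solution with $\lambda>0$), the $\lambda\|u\|_2^2$ term produced will have a favourable sign, and what remains is a combination of $\|\nabla u\|_p^p$, $\|u\|_p^p$ and the two potential terms $\int V|u|^p$ and $\int V|u|^{p-2}u\,\nabla u\cdot x$.

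Next I would estimate the two potential terms. For $\int_{\rn}V(x)|u|^p\,dx$ I would use, exactly as in Lemma \ref{lemma-MP}, Hölder with exponents $N/p$ and $(N/p)'=N/(N-p)$ together with the Sobolev inequality $\|u\|_{p^*}^p\le S_p^{-1}\|\nabla u\|_p^p$, giving $\bigl|\int V|u|^p\bigr|\le S_p^{-1}\|V\|_{N/p}\|\nabla u\|_p^p$. For the gradient term I would use Hölder with the triple of exponents $\frac{N}{p-1}$, $p$, $p^*$ applied to $\tilde W=V(\cdot)|\cdot|$, $|u|^{p-2}u$ (which lies in $L^{p/(p-1)}$ with norm $\|u\|_{p^*}^{p-1}$ after the Sobolev embedding) and $|\nabla u|$, obtaining
\begin{equation}\notag
\left|\int_{\rn}V(x)|u|^{p-2}u\,\nabla u\cdot x\,dx\right|\le \|\tilde W\|_{\frac{N}{p-1}}\,\|u\|_{p^*}^{p-1}\,\|\nabla u\|_p\le S_p^{-\frac{p-1}{p}}\|\tilde W\|_{\frac{N}{p-1}}\|\nabla u\|_p^p.
\end{equation}
Substituting both bounds and collecting the coefficient of $\|\nabla u\|_p^p$, the quantitative smallness hypothesis — the second line of \eqref{est-Vle0}, which is precisely designed so that $p^2 S_p^{-(p-1)/p}\|\tilde W\|_{N/p-1}+N\max\{p/2,(q-p-2)^+\}S_p^{-1}\|V\|_{N/p}$ is strictly smaller than $\min\{2(N/q-(N-p)/p),\,Nq-p(N+2)\}$ — forces this coefficient to be strictly positive, while the coefficient of $\|u\|_p^p$ is manifestly nonnegative (for $p\ge 2$) and the $\lambda\|u\|_2^2$ term is nonnegative. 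Hence $J_V(u)\ge 0$, and strict positivity follows because $u\not\equiv 0$ (it has $\|u\|_2=\rho>0$), so at least the gradient or the $L^p$ term is strictly positive. One should also note the elementary fact that $u\in\mathcal S_\rho$ solving \eqref{main-eq-not-normalised} cannot be identically zero, and that $\|u\|\ne 0$, to rule out the degenerate case.

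The main obstacle, and the only nontrivial bookkeeping, is choosing the linear combination of the Nehari and Pohozaev identities correctly and verifying that the resulting coefficient of $\|\nabla u\|_p^p$ is controlled by exactly the left-hand side of the second inequality in \eqref{est-Vle0}; the appearance of $\max\{p/2,(q-p-2)^+\}$ and of the two competing lower bounds $2(N/q-(N-p)/p)$ and $Nq-p(N+2)$ strongly suggests that two different linear combinations (or a one-parameter family) must be tested and the worst case taken, so I expect to carry out the computation twice — once optimising against the subcriticality gap $Nq-p(N+2)>0$ coming from the $\|\nabla u\|_p^p$ versus $\|u\|_q^q$ balance, and once against $N/q-(N-p)/p>0$ coming from the Sobolev-subcritical gap — and then combine. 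Everything else is a routine application of Hölder and Sobolev already rehearsed in Lemma \ref{lemma-MP} and Lemma \ref{lemma-Poho}.
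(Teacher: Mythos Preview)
Your overall strategy --- eliminate $\|u\|_q^q$ via an identity, then control the potential terms by H\"older and Sobolev exactly as you wrote --- is the right one, and your estimates for $\int V|u|^p$ and $\int V|u|^{p-2}u\,\nabla u\cdot x$ are precisely what the paper uses. But the paper's execution is simpler and avoids the issues in your plan.

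The paper uses \emph{only} the Pohozaev identity $P_V(u)=0$, not the Nehari identity at all: solving $P_V(u)=0$ for $\frac{1}{q}\|u\|_q^q$ and substituting directly into $J_V(u)$ already gives
\[
J_V(u)=\frac{Nq-p(N+2)}{Np(q-2)}\|\nabla u\|_p^p+\frac{q-p}{p(q-2)}\|u\|_p^p+\frac{q-p-2}{p(q-2)}\int_{\rn}V|u|^p\,dx-\frac{2}{N(q-2)}\int_{\rn}V|u|^{p-2}u\,\nabla u\cdot x\,dx,
\]
and then your two H\"older--Sobolev bounds finish the job provided
\[
N(q-p-2)^+S_p^{-1}\|V\|_{N/p}+2pS_p^{-(p-1)/p}\|\tilde W\|_{N/(p-1)}<Nq-p(N+2),
\]
which is implied by (but weaker than) \eqref{est-Vle0}. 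In particular the argument never touches $\lambda$, which is why the paper can remark that the lemma holds for \emph{any} solution of \eqref{main-eq-not-normalised} regardless of the sign of $\lambda$ or the value of $\|u\|_2$. Your invocation of $\lambda>0$ is therefore both unnecessary and, as stated, not available: the lemma's hypotheses do not include it, and in the paper's logical order $\lambda>0$ is only established later in Lemma~\ref{lem3.1}.

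Your reading of the $\max/\min$ structure in \eqref{est-Vle0} is also off: it is not that this single lemma requires two competing linear combinations. Only the pair $(q-p-2)^+$ and $Nq-p(N+2)$ is used here; the other pair $p/2$ and $2(N/q-(N-p)/p)$ is consumed in the proof that $\lambda>0$ (Lemma~\ref{lem3.1}). The condition \eqref{est-Vle0} is simply packaged to cover both lemmas at once.
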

\begin{remark}
We stress that Lemma \ref{lemma-en>0} holds true for any solution $u$ to \eqref{main-eq-not-normalised}, without any requirement about $\|u\|_2$.
\end{remark}
\begin{proof}
Let $u\in \mathcal{S}_{\rho}$ be a solution to problem (\ref{limit-prob}). The Pohozaev identity (\ref{Pohozaev-vle0}) yields that
\begin{equation}
\begin{aligned}
J_V(u)=&\frac{1}{p}(\|\nabla u\|_p^p+\|u\|_p^p)+\frac{1}{p}\int_{\rn}V(x)|u|^p dx-\frac{1}{q}\|u\|_q^q\\
=&\frac{1}{p}(\|\nabla u\|_p^p+\|u\|_p^p)+\frac{1}{p}\int_{\rn}V(x)|u|^p dx-\\
&\frac{2}{N(q-2)}\left(\frac{p(N+2)-2N}{2p}\|\nabla u\|^p_p+\frac{N(p-2)}{2p}\|u\|_p^p+\int_{\rn}V(x)\left(\frac{N}{2}|u|^p+|u|^{p-2}u\nabla u\cdotp x\right)
dx\right)\\
=&\frac{Nq-p(N+2)}{Np(q-2)}\|\nabla u\|_p^p+\frac{q-p}{p(q-2)}\|u\|_p^p+\frac{q-2-p}{p(q-2)}\int_{\rn}V(x)|u|^p dx\\
&+\frac{2}{N(q-2)}\int_{\rn}V(x)|u|^{p-2}u\nabla u\cdotp x dx.
\end{aligned}
\end{equation}
By the H\"{o}lder inequality and the Sobolev embedding $D^{1,p}(\rn)\subset L^{p^*}(\rn)$ we have
\begin{equation}
\label{est-CD}
\begin{aligned}
&\left|\int_{\rn}V(x)|u|^p dx\right|\le \|V\|_{\frac{N}{p}}\|u\|_{p^*}^p\le S_p^{-1}\|V\|_{\frac{N}{p}}\|\nabla u\|_p^p,\\
&\left|\int_{\rn}V(x)|u|^{p-2}u\nabla u\cdotp x\right|\le\|\tilde{W}\|_{\frac{N}{p-1}}\|u\|_{p^*}^{p-1}\|\nabla u\|_p\le S_p^{-\frac{p-1}{p}}\|\tilde{W}\|_{\frac{N}{p-1}}\|\nabla u\|_p^p,  
\end{aligned}    
\end{equation}
which concludes the proof thanks to (\ref{est-Vle0}), which yields that
$$N(q-p-2)^+S_p^{-1}\|V\|_{\frac{N}{p}}+2pS_p^{-\frac{p-1}{p}}\|\tilde{W}\|_{\frac{N}{p-1}}< Nq-p(N+2).$$
\end{proof}
In order to construct a Palais-Smale sequence at level $m_{V,\rho}$, we will use the following general result, which was stated in \cite{PR}. For a proof, we refer to \cite{G1993}.
\begin{proposition}\label{pro4.1}
Let $M$ be a Hilbert manifold, $\mathcal{I}\in C^1(M,\mathbb{R})$ be a given functional and $K\subset M$ be compact. Suppose that the subset
\begin{equation*}
\mathcal{C}\subset \{C\subset M:C ~\text{is~compact},K\subset C\}
\end{equation*}
is homotopy--stable, i.e., it is invariant with respect to deformations leaving $K$ fixed. Moreover, let
\begin{equation*}
\max_{u\in K}\mathcal{I}(u)<c:=\inf_{C\subset  \mathcal{C}}\max_{u\in C} \mathcal{I}(u)\in \mathbb{R},
\end{equation*}
let $\{\sigma_n\}\subset\mathbb{R} $ be a sequence such that $\sigma_n\rightarrow0$ and $\{C_n\}\subset \mathcal{C}$ be a sequence such that
\begin{equation*}
0\leq \max_{u\in C_n}\mathcal{I}(u)-c\leq \sigma_n.
\end{equation*}
Then there exists a sequence $\{v_n\}\subset M$ satisfying
\begin{description}
\item[(1)] $|\mathcal{I}(v_n)-c|\leq \sigma_n$,
\item[(2)] $\|\nabla_M\mathcal{I}(v_n)\|\leq C_1\sqrt{\sigma_n}$,
\item[(3)] dist$(v_n,C_n)\leq C_2\sqrt{\sigma_n}$.
\end{description}
\end{proposition}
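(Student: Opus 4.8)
The plan is to prove this as a quantitative minimax principle, arguing by contradiction with the help of a localized deformation flow on the Hilbert manifold $M$ and exploiting the homotopy-stability of $\mathcal{C}$. Fix $n$ and suppose, for contradiction, that no point $v$ satisfies the three conclusions simultaneously; concretely, assume that every $v$ in the band
\[
N_n:=\{v\in M:\ \mathrm{dist}(v,C_n)\le C_2\sqrt{\sigma_n},\ |\mathcal{I}(v)-c|\le\sigma_n\}
\]
obeys $\|\nabla_M\mathcal{I}(v)\|>C_1\sqrt{\sigma_n}$. The goal is to deform $C_n$, without moving $K$, into a new compact set whose maximum of $\mathcal{I}$ is strictly below $c$, which contradicts $c=\inf_{C\in\mathcal{C}}\max_C\mathcal{I}$ once we know the deformed set still lies in $\mathcal{C}$.

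First I would build the flow. Since $\mathcal{I}\in C^1(M,\mathbb{R})$ and the gradient is bounded away from zero on $N_n$, on a neighbourhood of $N_n$ there is a locally Lipschitz pseudo-gradient field $W$ with $\langle\nabla_M\mathcal{I},W\rangle\ge\|\nabla_M\mathcal{I}\|^2$ and $\|W\|\le2\|\nabla_M\mathcal{I}\|$; paracompactness of $M$ is what makes $W$ globally available. I would then introduce a Lipschitz cutoff $\phi$ equal to $1$ on the smaller band $\{\mathrm{dist}(\cdot,C_n)\le\frac{1}{2}C_2\sqrt{\sigma_n},\ |\mathcal{I}-c|\le\sigma_n\}$ and supported in $N_n$, and integrate the unit-speed field $X:=-\phi\,W/\|W\|$ to obtain a flow $\eta(t,\cdot)$. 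The normalization $\|X\|\le1$ forces $\mathrm{dist}(\eta(t,u),u)\le t$, so taking the stopping time $T:=C_2\sqrt{\sigma_n}$ keeps trajectories inside the distance band, where the lower bound on $\|\nabla_M\mathcal{I}\|$ is valid throughout.

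Next I would carry out the descent estimate. Along the flow $\frac{d}{dt}\mathcal{I}(\eta)=-\phi\,\langle\nabla_M\mathcal{I},W\rangle/\|W\|\le-\tfrac{1}{2}\phi\,\|\nabla_M\mathcal{I}\|$, which is $\le-\tfrac12 C_1\sqrt{\sigma_n}$ wherever $\phi=1$; integrating over $[0,T]$ yields a drop of at least $\tfrac12 C_1C_2\,\sigma_n$. Choosing the absolute constants so that $C_1C_2>4$ makes this drop exceed $2\sigma_n$. Because $\mathcal{I}$ is non-increasing along the flow, a point $u\in C_n$ with $\mathcal{I}(u)\le c+\sigma_n$ ends with $\mathcal{I}(\eta(T,u))\le c-\sigma_n<c$: either the trajectory stays in the energy band for all of $[0,T]$ and drops by more than $2\sigma_n$, or it first crosses the level $c-\sigma_n$ and stays below afterwards. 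To leave $K$ fixed I would use $\max_K\mathcal{I}<c$: fixing $\delta>0$ with $\max_K\mathcal{I}\le c-\delta$ and taking $n$ large so that $\sigma_n<\delta$, the set $K$ lies outside the support of $\phi$, hence $\eta(t,\cdot)\equiv\mathrm{id}$ on $K$. Thus $\eta(T,\cdot)$ is a deformation leaving $K$ fixed, so $\eta(T,C_n)\in\mathcal{C}$ by homotopy-stability, and $\max_{\eta(T,C_n)}\mathcal{I}\le c-\sigma_n<c$ contradicts the definition of $c$. This contradiction produces the desired $v_n$, with $C_1,C_2$ independent of $n$.

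The main obstacle is the careful construction and bookkeeping of the localized flow on the infinite-dimensional manifold $M$: one must simultaneously guarantee a globally defined, locally Lipschitz pseudo-gradient field (using $C^1$ regularity and paracompactness), global existence of the flow (automatic here since $X$ is bounded and supported near the compact $C_n$), the unit-speed normalization that couples the displacement $C_2\sqrt{\sigma_n}$ to the energy drop of order $C_1C_2\sigma_n$, and the verification that trajectories never leave the region where $\|\nabla_M\mathcal{I}\|\ge C_1\sqrt{\sigma_n}$ holds. An equally viable alternative is to invoke Ekeland's variational principle, either on $M$ restricted to the band, using compactness of $C_n$ to locate the perturbed minimizer, or on the space of compact sets containing $K$ equipped with the Hausdorff metric, where the scale $\sqrt{\sigma_n}$ arises as the standard Ekeland perturbation radius; I would nonetheless favour the deformation argument because it makes the role of the homotopy-stability hypothesis completely transparent.
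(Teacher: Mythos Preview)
The paper does not give its own proof of this proposition: it explicitly states the result ``was stated in \cite{PR}. For a proof, we refer to \cite{G1993}'' and moves on. Your deformation argument via a localized pseudo-gradient flow is exactly the standard proof one finds in Ghoussoub's monograph, so your approach coincides with what the paper is implicitly invoking.

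One small bookkeeping point worth tightening: your cutoff $\phi$ equals $1$ only on the half-radius distance band $\{\mathrm{dist}(\cdot,C_n)\le\tfrac12 C_2\sqrt{\sigma_n}\}$, while you run the unit-speed flow up to time $T=C_2\sqrt{\sigma_n}$. Thus you are only guaranteed $\phi\equiv1$ along the trajectory for $t\in[0,\tfrac12 C_2\sqrt{\sigma_n}]$, which yields a drop of at least $\tfrac14 C_1C_2\sigma_n$ rather than $\tfrac12 C_1C_2\sigma_n$; accordingly you need $C_1C_2>8$ instead of $C_1C_2>4$. This is a cosmetic adjustment and does not affect the structure of the argument.
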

\begin{lemma}\label{lemma-PS-seq}
Assume that (\ref{est-Vle0}) holds. Then there exists a $(PS)$ sequence $\{u_n\}\subset \mathcal{S}_\rho$ such that as $n\rightarrow\infty,$
\begin{equation}\label{syst-PS-seq}
J_V(u_n)\rightarrow m_{V,\rho},\,\nabla _{\mathcal{S}_\rho}J_V(u_n)\rightarrow 0,\, P(u_n)\rightarrow 0.
\end{equation}
\end{lemma}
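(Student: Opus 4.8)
The plan is to apply Proposition~\ref{pro4.1} with the Hilbert manifold $M=\mathcal{S}_\rho$ (which is a smooth submanifold of $W^{1,p}(\rn)\cap L^2(\rn)$ because the $L^2$-sphere is a level set of a smooth submersion), the functional $\mathcal{I}=J_V$, the compact set $K=\{\gamma(0),\gamma(1)\}=\{u_\rho^{t_1},u_\rho^{t_2}\}$ for any fixed reference path, and $\mathcal{C}=\Gamma_\rho$ regarded as the family of images $\gamma([0,1])$. First I would check that $\Gamma_\rho$ is homotopy-stable with boundary $K$: any deformation of $M$ that fixes the two endpoints $u_\rho^{t_1},u_\rho^{t_2}$ sends a path in $\Gamma_\rho$ to another path with the same endpoints, hence stays in $\Gamma_\rho$. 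By Lemma~\ref{lemma-mpg} we have the strict inequality $\max_{u\in K}J_V(u)=J_V(u_\rho^{t_1})<m_{V,\rho}$, so the hypothesis $\max_K\mathcal{I}<c$ of Proposition~\ref{pro4.1} holds with $c=m_{V,\rho}$, which is finite (indeed $0<m_{V,\rho}<c_\rho$ by Lemma~\ref{lemma-mpg}). Choosing a minimising sequence of paths $\gamma_n\in\Gamma_\rho$ with $0\le\max_{t}J_V(\gamma_n(t))-m_{V,\rho}\le\sigma_n\to0$ and applying the Proposition produces $v_n\in\mathcal{S}_\rho$ with $J_V(v_n)\to m_{V,\rho}$, $\|\nabla_{\mathcal{S}_\rho}J_V(v_n)\|\le C_1\sqrt{\sigma_n}\to0$, and $\mathrm{dist}(v_n,\gamma_n([0,1]))\le C_2\sqrt{\sigma_n}\to0$; this already gives the first two requirements in \eqref{syst-PS-seq}.

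It remains to promote this sequence to one that \emph{also} satisfies $P(v_n)\to0$ (the Pohozaev functional evaluated along the sequence). The standard device, following \cite{PR}, is to work on the augmented manifold or, more simply here, to exploit that every path in $\Gamma_\rho$ lies in the image of the scaling map: the reference path $\tau\mapsto u_\rho^{(1-\tau)t_1+\tau t_2}$ and, more generally, one can enlarge $M$ to $\widetilde{M}=\R\times\mathcal{S}_\rho$ acting by $(s,u)\mapsto u^{e^s}$, on which the natural functional $\widetilde{J}_V(s,u):=J_V(u^{e^s})$ has the property that its $\partial_s$-derivative at any critical point is exactly $P_V$-type quantity. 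Concretely I would run Proposition~\ref{pro4.1} on $\widetilde M$ with the lifted path family, so that the resulting sequence $(s_n,u_n)$ satisfies, in addition, $\partial_s\widetilde J_V(s_n,u_n)\to0$; setting $\tilde u_n:=u_n^{e^{s_n}}$ and using the explicit formula $\frac{d}{dt}\big|_{t=1}J_V(w^t)=P(w)$ from the proof of Lemma~\ref{lemma-Poho}, together with $s_n\to0$ (which follows because the endpoints are fixed and the $s$-coordinate of a near-minimising path must stay bounded, hence concentrate near $0$ along the relevant subsequence), one gets $P(\tilde u_n)\to0$ while $J_V(\tilde u_n)\to m_{V,\rho}$ and the constrained gradient still vanishes.

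The main obstacle is the bookkeeping for the extra Pohozaev condition: unlike the $p=2$ semilinear case one must be careful that the scaling $u^t(x)=t^{N/2}u(tx)$ is an $L^2$-isometry (so it preserves $\mathcal{S}_\rho$) but acts nontrivially on $\|\nabla u\|_p$, $\|u\|_p$ and $\|u\|_q$ with three \emph{different} powers of $t$, so the function $t\mapsto J_V(w^t)$ is a genuine three-term expression rather than the two-term one in the Laplacian case; this is exactly why the hypothesis $p>2$ (needed so that $\|u^t\|_p^p=t^{N(p-2)/2}\|u\|_p^p\to0$ as $t\to0^+$, giving the mountain-pass geometry) was imposed earlier. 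One must also verify that the $C^1$-regularity of $J_V$ on $W^{1,p}(\rn)\cap L^2(\rn)$ is enough to apply the abstract deformation result — this is where the integrability assumptions $V\in L^{N/p}(\rn)$ and $\tilde W\in L^{N/(p-1)}(\rn)$ in \eqref{est-Vle0} enter, via the Hölder estimates already recorded in \eqref{est-CD}. Everything else is a routine transcription of the argument in \cite{PR} to the quasilinear setting.
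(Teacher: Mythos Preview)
Your approach is essentially the paper's: apply Proposition~\ref{pro4.1} on the augmented manifold $\mathcal S_\rho\times\R$ with the dilated functional $\tilde J_V(u,s)=J_V(u^{e^s})$, so that the extra $\partial_s$-component of the constrained gradient yields the Pohozaev quantity. The first paragraph (running Proposition~\ref{pro4.1} directly on $\mathcal S_\rho$) is superfluous once you do the augmented version anyway.

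One point needs sharpening. Your justification for $s_n\to0$ (``the $s$-coordinate of a near-minimising path must stay bounded, hence concentrate near $0$'') is not the correct mechanism and does not by itself give convergence. The actual reason is that on the augmented manifold one \emph{chooses} the minimising sequence of paths to be $\tilde\gamma_n(\tau)=(\gamma_n(\tau),0)$, with $s$-component identically $0$; these are admissible because the augmented mountain-pass level coincides with $m_{V,\rho}$. Then conclusion (3) of Proposition~\ref{pro4.1}, namely $\mathrm{dist}\big((u_n,s_n),\tilde\gamma_n([0,1])\big)\to0$, forces $|s_n|\to0$ directly. Without pinning the $s$-component of the comparison paths there is no reason the abstract output should have $s_n$ convergent. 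This is exactly what the paper does (with $t_n\to1$ in place of $e^{s_n}\to1$), and it is also how one shows that the rescaled sequence $\tilde u_n=u_n^{e^{s_n}}$ inherits the constrained-gradient bound from $(u_n,s_n)$.
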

\begin{proof}
Let $\{\gamma_n\}_n\subset\Gamma_\rho$ be a sequence such that $$\max_{t\in[0,1]}J_V(\gamma_n(t))\le m_{V,\rho}+\frac{1}{n}.$$   
Using that $J_V(|u|)=J_V(u)$ for any $u\in W^{1,p}(\rn)\cap L^2(\rn)$, we can assume that $\gamma_n(t)\ge 0$ a. e. in $\rn$. Define
$$\tilde{J}_V(u,t):=J_V(u^t),\qquad\forall\,(u,t)\in (W^{1,p}(\rn)\cap L^2(\rn))\times(0,\infty)$$
and
\begin{equation*}
\tilde{\Gamma}_\rho=\{\tilde{\gamma}:[0,1]\rightarrow \mathcal{S}_\rho\times \R:\, \tilde{\gamma}(0)=(u_\rho^{t_1},1),~ \tilde{\gamma}(1)=(u_\rho^{t_2},1)\}.
\end{equation*}
Note that $\tilde{\gamma}_n(t):=(\gamma_n(t),1)\in \tilde{\Gamma}_\rho$, for any $\gamma\in\Gamma_\rho$. As a consequence, applying Proposition \ref{pro4.1} with $\mathcal{I}=\tilde{J}_V$ and
\begin{equation*}
M :=\mathcal{S}_\rho\times\R,~K:=\{(u_\rho^{t_1},1),(u_\rho^{t_2},1)\},~\mathcal{C}:=\tilde{\Gamma}_\rho,~C_n:=\{(\gamma_n(t),1):t\in[0,1]\},
\end{equation*}
there exists $(v_n,t_n)\in \mathcal{S}_\rho\times \mathbb{R}$ such that as $n\rightarrow\infty$,
\begin{equation*}
\tilde{J}_V(v_n,t_n)\rightarrow m_{V,a},~\nabla\tilde{J}_V(v_n,t_n)\rightarrow0.
\end{equation*}
Moreover, we get
\begin{equation*}
\min_{t\in[0,1]}(\|v_n-\gamma_n(t)\|+\|v_n-\gamma_n(t)\|_2
+|t_n-1|)\leq \frac{\tilde{C}}{\sqrt{n}},
\end{equation*}
thus $t_n\rightarrow 1$ and there exists $s_n\in [0,1]$ such that $$\|v_n-\gamma_n(s_n)\|+\|v_n-\gamma_n(s_n)\|_2
\rightarrow0\qquad n\rightarrow\infty.$$
\vskip0.1in
Define $u_n:=(v_n)^{t_n}.$ Since $\gamma_n(t)\geq0$ a.e. in $\mathbb{R}^N$, then $\|v_n^-\|_p\leq \|v_n-\gamma_n(s_n)\|_p=o(1)$, so that $v_n^-\rightarrow0$ a.e. in $\mathbb{R}^N$, up to a subsequence. So
\begin{equation}\label{eq2.15}
\|u_n^-\|_p\rightarrow0\quad\text{as}\quad n\rightarrow\infty.
\end{equation}
Now we show $\{u_n\}$ is a $(PS)$ sequence for $J_V$.\\

First we note that $J_V(u_n)\rightarrow m_{V,\rho}$  as $n\rightarrow\infty$.\\

Furthermore, for each $w\in W^{1,p}(\rn)$, set $w_n:=(w)^{-t_n}$. Then one can deduce that
\begin{equation*}
\nabla (J_V-J_{\infty})(u_n)[w]=\int_{\mathbb{R}^3}V(\frac{x}{t_n})|v_n|^{p-2}v_n w_ndx,
\end{equation*}
which yields that
\begin{equation*}
\nabla J_V(u_n)[w]=\nabla\tilde{J}_V(v_n,t_n)[(w_n,1)]+o(1)||w_n||.
\end{equation*}
Moreover, $\int_{\mathbb{R}^N}v_nwdx=0$ is equivalent to $\int_{\mathbb{R}^N}u_nw_ndx=0$. By the definition of $w_n$, we know that for $n$ large, $||w_n||^p\leq 2||w||^p$, thus we have $\nabla J_V(u_n)[w]=o_n(1)\|w\|$. 
%Note that as $ n\rightarrow\infty$,
%\begin{equation*}
%\nabla\tilde{J}_V(v_n,t_n)[(0,1)]\rightarrow0\quad\text{and}\quad t_n\rightarrow1.
%\end{equation*}
In particular, differentiating in $t$ we have
\begin{equation*}
\partial_t|_{t=1}\big(\int_{\rn}V(x)t^{Np/2}|u|^p(tx)dx\big)=\int_{\rn}V(x)(\frac{Np}{2}|u|^p+p |u|^{p-2}u\nabla u\cdot x)dx\qquad\forall\, u\in W^{1,p}(\rn)
\end{equation*}
and
\begin{equation*}
\partial_t|_{t=1} I(u^t)=\frac{p(N+2)-2N}{2p}\|\nabla u\|_p^p+\frac{N(p-2)}{2p}\|u\|^p_p-\frac{N(q-2)}{2q}\|u\|_q^q\qquad\forall\, u\in W^{1,p}(\rn),
\end{equation*}
hence we can see that $P(u_n)\to 0\text{ as}~n\rightarrow\infty$,
%\begin{equation*}
%\|\nabla u_n\|_2^2+\frac{1}{4}B(u_n)-\frac{3(p-2)}{2p}\|u_n\|_p^p+\frac{1}{2}\int_{\mathbb{R}^3}V(x)(3|u_n|^2+2v_n\nabla u_n\cdot x)dx\rightarrow0,
%\end{equation*}
which means $u_n$ almost satisfies the Pohozaev identity. %and \eqref{e2.16} holds. 
This completes the proof.
\end{proof}
\begin{lemma}\label{lem3.1}
Assume that (\ref{est-Vle0}) holds. Let $\{u_n\}\subset \mathcal{S}_{\rho}$ be the $(PS)$ sequence for $J_V$ obtained in Lemma \ref{lemma-PS-seq}.  Then $\{u_n\}$ is bounded in $W^{1,p}(\mathbb{R}^N)$. Moreover, there exists $\rho^* >0$ and a sequence of Lagrange multipliers
\begin{equation*}
\lambda_n:=-\frac{\nabla J_V(u_n)[u_n]}{\rho^2}
\end{equation*}
such that $\lambda_n\rightarrow \lambda>0$ for any $\rho\in(0,\rho^*)$.
\end{lemma}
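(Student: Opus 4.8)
The plan is to exploit the three asymptotic balances carried by the sequence produced in Lemma~\ref{lemma-PS-seq} --- the energy level $J_V(u_n)\to m_{V,\rho}$, the Pohozaev--type relation $P(u_n)\to 0$, and, once boundedness is available, the identity $\nabla J_V(u_n)[u_n]=-\lambda_n\rho^2$ coming from the very definition of $\lambda_n$ --- and to combine them linearly. For boundedness I would form $\tfrac{N(q-2)}{2}J_V(u_n)-P(u_n)$, which cancels $\|u_n\|_q^q$ and leaves
\[
\frac{Nq-p(N+2)}{2p}\,\|\nabla u_n\|_p^p+\frac{N(q-p)}{2p}\,\|u_n\|_p^p+(\text{potential terms})=\frac{N(q-2)}{2}\,m_{V,\rho}+o(1),
\]
where the coefficients of $\|\nabla u_n\|_p^p$ and $\|u_n\|_p^p$ are strictly positive because $q>p\frac{N+2}{N}$ and $q>p$. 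Bounding the potential terms by the H\"older inequality and the Sobolev embedding $D^{1,p}(\rn)\subset L^{p^*}(\rn)$ exactly as in~\eqref{est-CD} --- that is, by a multiple of $S_p^{-1}\|V\|_{N/p}\|\nabla u_n\|_p^p$, plus, where the gradient--potential term appears, a multiple of $S_p^{-(p-1)/p}\|\tilde{W}\|_{N/(p-1)}\|\nabla u_n\|_p^p$ --- assumption~\eqref{est-Vle0} is tailored precisely so that the resulting net coefficient of $\|\nabla u_n\|_p^p$ stays positive; since $V\le 0$, this yields $\|\nabla u_n\|_p^p+\|u_n\|_p^p\le C$, hence $\{u_n\}$ is bounded in $W^{1,p}(\rn)$ (recall $\|u_n\|_2=\rho$ is prescribed).

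Boundedness makes $\lambda_n=-\rho^{-2}\bigl(\|\nabla u_n\|_p^p+\|u_n\|_p^p+\int_{\rn}V|u_n|^p-\|u_n\|_q^q\bigr)$ bounded, via $\bigl|\int_{\rn}V|u_n|^p\bigr|\le S_p^{-1}\|V\|_{N/p}\|\nabla u_n\|_p^p$ and the Sobolev bound on $\|u_n\|_q$; so along a subsequence $\lambda_n\to\lambda$ and, say, $\|\nabla u_n\|_p^p\to\ell$, $\|u_n\|_p^p\to\beta$, $\|u_n\|_q^q\to\gamma$, $\int_{\rn}V|u_n|^p\to-\ell'$ with $\ell'\ge 0$. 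A direct computation starting from the limiting forms of the three balances (eliminating $\gamma$, and using the energy and Pohozaev balances to rewrite $m_{V,\rho}$ in terms of $\ell$ and $\beta$) collapses everything to
\[
\lambda\rho^2=\frac{2}{N}\,\ell+\frac{p-2}{p}\,\ell'-2m_{V,\rho}=\frac{2\bigl(p(q+N)-Nq\bigr)}{Np(q-2)}\,\ell+\ell'-\frac{2(q-p)}{p(q-2)}\,\beta,
\]
in which the coefficient of $\ell$ in the last form is positive exactly because $q<p^*$, and $\ell'\ge 0$.

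It remains to show $\lambda>0$, and this is where the smallness of $\rho$ is used. First $\ell>0$: otherwise the Pohozaev relation (and $p>2$) forces $\beta=\gamma=0$, hence $\int_{\rn}V|u_n|^p\to 0$ and $J_V(u_n)\to 0$, contradicting $m_{V,\rho}>0$ from Lemma~\ref{lemma-mpg}. Next, the Pohozaev relation together with the Gagliardo--Nirenberg inequality~\eqref{eq:GN:1}, and the facts $\|u_n\|_p^p\ge 0$, $p\ge 2$, $\theta q>p$ (the last equivalent to $q>p\frac{N+2}{N}$), gives a lower bound $\ell\ge C_1\rho^{-\sigma_1}$ with $\sigma_1=\frac{p(1-\theta)q}{\theta q-p}>0$ --- the mass-supercritical effect, whereby a small mass forces a large gradient on the Pohozaev balance. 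On the other hand, interpolating $\|u_n\|_p$ between $L^2$ and $L^{p^*}$ and then using Sobolev gives $\beta\le C_2\,\rho^{\sigma p}\ell^{1-\sigma}$ with $\sigma=\frac{2p}{p(N+2)-2N}\in(0,1)$. Substituting both bounds into the displayed identity, positivity of $\lambda\rho^2$ follows once $\ell\ge C_0\rho^{p}$, which, since $\ell\ge C_1\rho^{-\sigma_1}$, holds as soon as $\rho^{p+\sigma_1}\le C_1/C_0$; hence $\lambda>0$ for all $\rho\in(0,\rho^*)$ with $\rho^*$ depending only on $N,p,q,\|V\|_{N/p},\|\tilde{W}\|_{N/(p-1)}$.

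The hard part is precisely this last step: unlike the $L^p$-subcritical regime, the sign of $\lambda$ is not automatic and must be extracted by quantifying how large $\|\nabla u_n\|_p^p$ is, a comparison that closes only because of the mass-supercritical scaling --- which is exactly why the conclusion is restricted to $\rho$ small. A secondary, more technical, difficulty is to verify that in the boundedness step the potential terms produced by the Pohozaev combination really do fit inside the margin left by~\eqref{est-Vle0}.
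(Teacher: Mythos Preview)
Your approach for boundedness is essentially the paper's: forming the combination $\tfrac{N(q-2)}{2}J_V(u_n)-P_V(u_n)$ eliminates $\|u_n\|_q^q$, and then \eqref{est-CD} together with \eqref{est-Vle0} absorbs the potential contributions into the $\|\nabla u_n\|_p^p$--coefficient. The paper does exactly this (with notation $A_n,B_n,C_n,D_n,E_n$); the only cosmetic difference is that the paper first bounds $A_n$ alone, then recovers $E_n$ via Gagliardo--Nirenberg (using $\|u_n\|_2=\rho$), and finally $B_n$ from the Pohozaev balance.

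For the sign of $\lambda$, you choose to work with the limits $\ell,\beta,\gamma,\ell'$ of the sequence quantities, whereas the paper passes to the weak limit $u$ and uses the exact equation and Pohozaev identity for $u$. Your choice is arguably cleaner, since for the weak limit one only has $\|u\|_2\le\rho$, not equality. The subsequent ingredients --- $\ell>0$ from $m_{V,\rho}>0$, the lower bound $\ell\ge C_1\rho^{-\sigma_1}$ from Pohozaev plus Gagliardo--Nirenberg, and the interpolation bound $\beta\le C_2\rho^{\sigma p}\ell^{1-\sigma}$ --- are the same as the paper's.

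There is, however, a genuine slip in your displayed identity for $\lambda\rho^2$. Writing $d:=\lim_n D_n$ for the gradient--potential term, the correct elimination (substituting $\gamma=\ell+\beta-\ell'+\lambda\rho^2$ into the Pohozaev balance) gives
\[
\lambda\rho^2=\frac{2\bigl(p(q+N)-Nq\bigr)}{Np(q-2)}\,\ell-\frac{2(q-p)}{p(q-2)}\,\beta+c_{\ell'}\,\ell'+c_d\,d,
\]
with $c_{\ell'}<0$ (since $p>2$) and $c_d\ne 0$; neither your coefficient $+\ell'$ nor the absence of $d$ is correct, and there is no combination of the three balances that eliminates $d$. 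Consequently the step ``$\ell'\ge 0$, so drop it'' goes in the wrong direction. The repair is exactly what you already do in the boundedness step, and what the paper does here: bound $\ell'\le S_p^{-1}\|V\|_{N/p}\,\ell$ and $|d|\le S_p^{-(p-1)/p}\|\tilde{W}\|_{N/(p-1)}\,\ell$ via \eqref{est-CD}, absorb both into the $\ell$--coefficient, and invoke the second line of \eqref{est-Vle0} (which is calibrated precisely for this, cf.~the paper's condition \eqref{cond-V-lambda>0}) to keep the net coefficient positive. With this correction your argument concludes exactly as written.
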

\begin{proof}
Let $u_n$ be the PS-sequence  constructed in Lemma \ref{lemma-PS-seq}. Set
\begin{equation}
\begin{aligned}
&A_n:=\|\nabla u_n\|_p^p,\,B_n:=\|u_n\|^p_p,\,C_n:=\int_{\rn}V(x)|u_n|^pdx,\\
&D_n:=\int_{\rn}V(x)|u_n|^{p-2}u_n\nabla u_n\cdotp x dx,\,E_n:=\|u_n\|_q^q.    
\end{aligned}
\end{equation}
Then relations \eqref{syst-PS-seq} are equivalent to 
\begin{equation}
\label{syst-PS-seq-explicit}
\begin{aligned}
&A_n+B_n+C_n-\frac{p}{q}E_n=p m_{V,\rho}+o(1) \\
&\frac{p(N+2)-2N}{p}A_n+\frac{N(p-2)}{p}B_n-\frac{N(q-2)}{q}E_n+\frac{Np}{2}C_n+pD_n=o(1)\\
&A_n+B_n+C_n+\lambda_n\rho^2-E_n=o(1)(A_n^{1/p}+B_n^{1/p}).
\end{aligned}    
\end{equation}
Subtracting the second equation to the first one in \eqref{syst-PS-seq-explicit} we can see that
$$\frac{2N-p(N+1)}{p}A_n+\frac{p-N(p-2)}{p}B_n+\frac{2-Np}{2}C_n-pD_n+\frac{N(q-2)-p}{q}E_n=pm_{V,\rho}+o(1).$$
Using once again the first relation in \eqref{syst-PS-seq-explicit}, we have
\begin{equation}
\frac{Nq-p(N+2)}{p}A_n+\frac{N(q-p)}{p}B_n+N\frac{2(q-2)-p^2}{2p}C_n-pD_n-N(q-2)m_{V,\rho}=o(1)   
\end{equation}
Note that $Nq-p(N+2)>0$ since $q>p\frac{N+2}{N}$.
Using \eqref{est-CD}, the terms involving the potential are estimated by
$$pD_n-N\frac{2(q-2)-p^2}{2p}C_n\le \left(pS_p^{-\frac{p-1}{p}}\|\tilde{W}\|_{\frac{N}{p-1}}+N\frac{(2(q-2)-p^2)^+}{2p}S_p^{-1}\|V\|_{\frac{N}{p}}\right)A_n,$$
which yields that there exists $n_0>0$ such that
\begin{equation}\notag
\begin{aligned}
&\left(\frac{Nq-p(N+2)}{p}-N\frac{(2(q-2)-p^2)^+}{2p}S_p^{-1}\|V\|_{\frac{N}{p}}-pS_p^{-\frac{p-1}{p}}\|\tilde{W}\|_{\frac{N}{p-1}}\right)A_n\\
&\le -\frac{N(q-p)}{p}B_n+N(q-2)m_{V,\rho}+o(1)\le 2N(q-2)c_\rho\qquad\forall\,n\ge n_0,
\end{aligned}
\end{equation}
since $B_n\ge 0$. Thanks to \eqref{est-Vle0}, which implies that
$$N\frac{(2(q-2)-p^2)^+}{2p}S_p^{-1}\|V\|_{\frac{N}{p}}+pS_p^{-\frac{p-1}{p}}\|\tilde{W}\|_{\frac{N}{p-1}}<\frac{Nq-p(N+2)}{p},$$
this yields that $A_n$ is bounded, since $\frac{2(q-2)-p^2}{2}\le q-p-2$ for $p\ge 2$. Due to the Sobolev embeddings, $E_n$ is also bounded. Hence, due to \eqref{est-CD}, $C_n$ and $D_n$ are bounded as well. Finally, the Pohozaev identity (namely the second relation in \eqref{syst-PS-seq-explicit}) yields that $B_n$ is also bounded, so that $\lambda_n$ is bounded too.

Since we see that $u_n$ is bounded in $W^{1,p}(\rn)$ and in $L^2(\rn)$, then there exists $u\in W^{1,p}(\rn)$ and $\lambda$ such that up to a subsequence (relabelled by the same indices) $u_n\to u$ weakly in $W^{1,p}(\rn)$ and in $L^2(\rn)$ and $\lambda_n\to \lambda$. Due to the weak convergence, the pair $(u,\lambda)$ satisfies the equation
\begin{equation}
\label{eq-u}
-\Delta u+(1+V(x))|u|^{p-2}u+\lambda u=|u|^{q-2}u\qquad\text{in}\,\rn.    
\end{equation}
Let us show that $\lambda>0$. 
% By Theorem~\ref{}, we have $u$ as a solution of
% \begin{align*}
%     -\Delta_{p}u+(1+V(x))|u|^{p-2}u +\lambda u= |u|^{q-2}u\quad\rn .
% \end{align*}
Testing equation \eqref{eq-u} with $u$ itself and using Lemma \ref{lemma-Poho}, it is possible to see that the relations
\begin{equation}
\label{syst-u}
\begin{aligned}
&A+B+C+\lambda\rho^2=E\\
&\frac{p(N+2)-2N}{p}A+\frac{N(p-2)}{p}B-\frac{N(q-2)}{q}E+\frac{Np}{2}C+pD=0,
\end{aligned}
\end{equation}
are fulfilled, where we have set
$$A:=\|\nabla u\|^p_p,\, B:=\|u\|^p_p,\,C:=\int_{\rn}V(x)|u|^pdx,\,D:=\int_{\rn}V(x)u\nabla u\cdotp x dx,\, E:=\|u\|^q_q.$$
The Pohozaev identity gives
\begin{equation}\notag
\begin{aligned}
\frac{p(N+2)-2N}{p}A&\le\frac{p(N+2)-2N}{p}A+\frac{N(p-2)}{p}B\\
&=\frac{N(q-2)}{q}E-\frac{Np}{2}C-pD.
\end{aligned}    
\end{equation}
Using the Gagliardo-Nirenberg inequality we have
$$E\le \kappa\|\nabla u\|_p^{\theta q}\|u\|_2^{(1-\theta)q}=\kappa A^{\theta\frac{q}{p}}\rho^{(1-\theta)q}.$$
Moreover, \eqref{est-CD} gives
$$|C|\le S_p^{-1}\|V\|_{N/p}A,\qquad |D|\le S_p^{-\frac{p-1}{p}}\|\tilde{W}\|_{\frac{N}{p-1}}A.$$
As a consequence
$$\left(\frac{p(N+2)-2N}{p}-\frac{Np}{2}S_p^{-1}\|V\|_{N/p}-pS_p^{-\frac{p-1}{p}}\|\tilde{W}\|_{\frac{N}{p-1}}\right)A^{1-\theta\frac{q}{p}}\le\tilde{\kappa}\rho^{(1-\theta)q}.$$
Using that $1-\theta\frac{q}{p}<0$ and
\begin{equation}
\label{extra-cond-V1}
\frac{p(N+2)-2N}{p}-\frac{Np}{2}S_p^{-1}\|V\|_{N/p}-pS_p^{-\frac{p-1}{p}}\|\tilde{W}\|_{\frac{N}{p-1}}>0,
\end{equation}
due to \eqref{est-Vle0} and the fact that
$$2\left(\frac{N}{q}-\frac{N-p}{p}\right)<\frac{p(N+2)-2N}{p},$$
we deduce that $A\to\infty$ as $\rho\to 0$.\\

Multiplying the first relation in \eqref{syst-u} by $\frac{N(q-2)}{2q}$, the second one by $1/2$ and subtracting, it is possible to see that
\begin{equation}
\begin{aligned}
\lambda\rho^2\left(\frac{N}{2}-\frac{N}{q}\right)&=\left(\frac{N}{q}-\frac{N-p}{p}\right)A-\left(\frac{N}{p}-\frac{N}{q}\right)B+\left(\frac{N}{q}+\frac{N}{4}(p-2)\right)C+\frac{p}{2}D \\
&\ge\left(\frac{N}{q}-\frac{N-p}{p}-\left(\frac{N}{q}+\frac{N}{4}(p-2)\right)S_p^{-1}\|V\|_{N/p}-\frac{p}{2}S_p^{-\frac{p-1}{p}}\|\tilde{W}\|_{\frac{N}{p-1}}\right)A\\
&-\left(\frac{N}{p}-\frac{N}{q}\right)B .
\end{aligned}
\end{equation}
In order to control the negative term involving $B$, we use the interpolation inequality $$\|u\|^p_p\le\|u\|_2^{\beta p}\|u\|_q^{(1-\beta)q},\qquad\frac{1}{p}=\frac{\beta}{2}+\frac{1-\beta}{q},\,\beta\in(0,1),$$
and the Gagliardo-Nirenberg inequality we can see that 
\begin{equation}\notag
\begin{aligned}
&B\le \|u\|_2^{\beta p}\|u\|_q^{(1-\beta)q}\le \kappa \|u\|_2^{\beta p}(\|u\|_2^{1-\theta}\|\nabla u\|_p^\theta)^{(1-\beta)p}\\
&=\kappa\rho^{\beta p+(1-\theta)(1-\beta)p}A^{\theta (1-\beta)}\le \kappa\rho^{\beta p+(1-\theta)(1-\beta)p}A,   
\end{aligned}
\end{equation}
for $\rho>0$ sufficiently small, since $(1-\beta)\theta\in(0,1)$ and $A\to\infty$ as $\rho\to 0$. As a consequence, $\lambda>0$ if 
\begin{equation}
\label{cond-V-lambda>0}
\frac{N}{q}-\frac{N-p}{p}-\left(\frac{N}{q}+\frac{N}{4}(p-2)\right)S_p^{-1}\|V\|_{N/p}-\frac{p}{2}S_p^{-\frac{p-1}{p}}\|\tilde{W}\|_{\frac{N}{p-1}}>0
\end{equation}
and $\rho>0$ is small enough. It is possible to see that \eqref{cond-V-lambda>0} is true thanks to \eqref{est-Vle0}, since $$2\left(\frac{1}{q}+\frac{1}{4}(p-2)\right)<\frac{p}{2}.$$ 
%\raj{On the other hand, we have from Pohozaev identity
%\begin{align}\label{eq:poho:2a}
 %   \frac{N-p}{p}\int_{\rn}|\nabla u|^p\, dx + \frac{N}{p}\int_{\rn}V(x) | u|^p\, dx + \frac{1}{Np}\int_{\rn}\widetilde{V}(x) |u|^p\, dx + \frac{\lambda N}{2}\int_{\rn} u^2\, dx \nonumber\\
  %  = \frac{N}{q} \int_{\rn} | u|^q\, dx.
%\end{align}
%We make the following operation as multiplied \eqref{eq:var:1} by $\frac{N}{2}$ and subtract \eqref{eq:poho:2a}. We obtain
%\begin{align}\label{eq:var:poho:1}
%    \frac{\frac{Np}{2}+p-N}{p}\int_{\rn}|\nabla u|^p\, dx + \frac{\frac{Np}{2}-N}{p}\int_{\rn}(1+V(x)) | u|^p\, dx - \frac{1}{Np}\int_{\rn}\widetilde{V}(x) |u|^p\, dx \nonumber\\
  %  = \frac{\frac{Nq}{2}-N}{q} \int_{\rn} | u|^q\, dx.
%\end{align}
%Furthermore, multiplying~\eqref{eq:var:1} by $\frac{N}{q}$ and subtract from~\eqref{eq:poho:2a}, we obtain
%\begin{align}\label{eq:var:poho:2}
 %   \left( \frac{N-p}{p}- \frac{N}{q}\right)\int_{\rn}|\nabla u|^p\, dx
 %   + \left(\frac{N}{p}-\frac{N}{q} \right) \int_{\rn}(1+V(x)) | u|^p\, dx + \frac{1}{Np}\int_{\rn}\widetilde{V}(x) |u|^p\, dx \nonumber\\
%    +\lambda \left(\frac{N}{2}-\frac{N}{q} \right)\int_{\rn} |u|^2\, dx =0.
%\end{align}
%}
\end{proof}

Now we can conclude the proof of Theorem \ref{th-Vle0} by means of the splitting Lemma, which gives compactness of the Palais-Smale sequence found in Lemma \ref{lemma-PS-seq}.
\begin{proof}
First we note that, thanks to the fact that the Palais-Smale sequence $u_n$ constructed in Lemma \ref{lem3.1} is bounded in $W^{1,p}(\rn)\cap L^2(\rn)$ and $\lambda_n\to\lambda$, there exists a subsequence, still denoted by $u_n$, and a solution $u\in W^{1,p}(\rn)\cap L^2(\rn)$ to the equation
$$-\Delta_p u+(1+V(x))|u|^{p-2}u+\lambda u=|u|^{q-2}u\qquad\text{in}\,\rn$$
such that $u_n\rightharpoonup u$ in $W^{1,p}(\rn)\cap L^2(\rn)$.\\

Thanks to the fact that $\lambda>0$ and the assumptions about $V$, we can apply the splitting Lemma \eqref{splitting-lemma} which yields that 
\begin{equation}\notag
u_n=u+\sum_{j=1}^kw^j(\cdot-y_n^j )+o(1)~\text{strongly~in}~W^{1,p}(\mathbb{R}^N).
\end{equation}
where $w^j$ are solutions to the limit equation $$-\Delta_p w+|w|^{p-2}w+\lambda w=|w|^{q-2}w\qquad\text{in}\,\rn,$$
\begin{equation}\notag
\|u_n\|_2^2=\|u\|_2^2+\sum_{j=1}^k\|w^j\|_2^2+o(1)
\end{equation}
and
\begin{equation}\notag
J_{V,\lambda}(u_n)= J_{V,\lambda}(u)+\sum_{j=1}^kI_{\lambda}(w^j)+o(1) .
\end{equation} 
As a consequence we have
$$J_V(u_n)=J_V(u)+\sum_{j=1}^k I(w^j)+o(1).$$
If we assume by contradiction that $k\ge 1$, letting $n\to\infty$ we have
$$m_{V,\rho}=J_V(u)+\sum_{j=0}^k I(w^j).$$
Using that $u$ is a solution to \eqref{main-eq-not-normalised}, Lemma \ref{lem3.1} yields that $J_V(u)>0$. Moreover, using Remark \ref{rem-least-energy-sol} and the fact that $c_\rho$ is nonincreasing in $\rho$, we have $I(w^j)\ge c_{\|w^j\|_2}\ge c_\rho$. Finally, using the fact that $V\ne 0$, Lemma \ref{lemma-mpg} gives
$$c_\rho>m_{V,\rho}=J_V(u)+\sum_{j=0}^k I(w^j)>\sum_{j=0}^k c_{\|w^j\|_2}\ge c_\rho,$$
a contradiction. This yields that $k=0$, or equivalently $u_u\to u$ in $W^{1,p}(\rn)\cap L^2(\rn)$, so that $\|u\|_2=\rho$. This concludes the proof of Theorem \ref{th-Vle0}.
\end{proof}

\subsection{The radial case: the proof of Lemma \ref{lemma-compact-rad}}\label{subsec-rad}

The proof parallels the one of Lemma $1.1$ and Theorem $1.2$ of \cite{kavian1993introduction}.
\begin{proof}
We start by proving that there exists a constant $C(N,p)>0$ such that
\begin{equation}\label{point-dec-rad}
|\varphi(x)|\le C(N,p)|x|^{-\frac{N-1}{p}}\|\varphi\|\qquad\forall \,\varphi\in W^{1,p}(\rn).
\end{equation}
Since the space of radial $C^\infty_c(\rn)$-functions is dense in $W^{1,p}_{rad}(\rn)$, it is enough to prove that \eqref{point-dec-rad} holds true for any radial function $\varphi\in C^\infty_c(\rn)$. For this purpose, writing $\varphi(x)=\psi(r)$, where $r:=|x|$, we note that
\begin{equation}\notag
\begin{aligned}
|\varphi(x)|^p&=|\psi(r)|^p=-p\int_r^\infty |\psi(s)|^{p-2}\psi(s)|\psi'(s)ds\le\int_r^\infty |\psi'(s)|^p ds+\frac{p'}{p}\int_r^\infty |\psi'(s)|^p ds\\
&\le c_p\int_r^\infty s^{-(N-1)}(|\psi'(s)|^p+|\psi(s)|^p)s^{N-1}ds\le c_p r^{1-N}\|\varphi\|^p,
\end{aligned}
\end{equation}
which proves the decay estimate \eqref{point-dec-rad}.\\

Now let us consider a bounded sequence $\{u_n\}\subset W^{1,p}(\rn)$. Then, up to a subsequence, $u_n\rightharpoonup u$ weakly in $W^{1,p}(\rn)$, for some $u\in W^{1,p}(\rn)$. Then, for any $\varepsilon>0$, there exists $R_0(\varepsilon)>0$ such that, for any $R\ge R_0(\varepsilon)$ we have
\begin{equation}\notag
\begin{aligned}
&\int_{\rn\setminus B_R(0)}|u_n-u|^qdx\le\sup_{\rn\setminus B_R(0)}(|u_n-u|^{q-p})\left(\int_{\rn\setminus B_R(0)}|u_n-u|^pdx\right)\\
&\le C(p,N)R^{-\frac{(q-p)(N-1)}{p}}(\sup_{k}\|u_k\|_p^p+\|u\|_p^p)\le \tilde{C}(p,N)R^{-\frac{(q-p)(N-1)}{p}}<\varepsilon,\,\forall\, n\in \mathbb{N},
\end{aligned}
\end{equation}
since $u_n$ is bounded in $W^{1,p}(\rn)$. Moreover, since $u_n\to u$ strongly in $L^q(B_R(0))$, there exists $n_0(\varepsilon)>0$ such that 
$$\int_{B_R(0)}|u_n-u|^q dx\le \varepsilon\qquad\forall\,n\ge n_0(\varepsilon),$$
which concludes the proof.
\end{proof}

%\begin{acknowledgement}
%The authors are particularly grateful to prof. Silvia Cingolani for her help and the discussions with her, which partly inspired the results in this paper.
%\end{acknowledgement}

% \begin{funding}
%   The authors were partially supported by the grant DFG project (Justus Liebig university and 62202684). 
%   https://www.dfg.de/en/funded-projects.
% \end{funding}
 % \bibliographystyle{plain}
 % \bibliography{ref}

\section*{Data availability}
Data sharing not applicable to this article as no datasets were generated or analysed during the current study.

\section*{Declarations}
{\bf Competing interest statement.} The authors declare that they have no competing interests regarding this manuscript.

\bmhead{Acknowledgements. 
The authors are  grateful for the reviews by anonymous referees to improve the quality of the article}

\begin{appendices}
\section{}
In this section we compute the decay rate of the solution of the following problem
\begin{align}
    -\Delta_p u +u^{r-1} &=u^{q-1}\quad\text{in}\ \rn,\\
    u&>0,
\end{align}
for $1<p<q<r<p^*$, which gives \eqref{point-dec-rad}. Using and $u$ is radial in the form $u(x)=v(|x|)$, we can see that $v$ solves
\begin{align}\label{eq:rad:V0}
    -\frac{1}{s^{N-1}}(|v'|^{p-2}v' s^{N-1})'=v^{q-1}-v^{r-1\qquad}\forall s\in[\rho,\infty).
\end{align}
Assuming that $v'<0$ on $(\rho_0,\infty)$ for some $\rho_0>\rho$, we have from~\eqref{eq:rad:V0}
\begin{align}\label{eq:rad:v1}
    \left(s^{N-1}(-v')^{p-1}\right)'= s^{N-1}(v^{q-1}-v^{r-1}).
\end{align}
After integrating on $(\rho_0,s)$, we obtain
\begin{align}
%s^{n-1}v'(s)^{p-1}\le s^{n-1}v'(s)^{p-1}-\rho_0^{n-1}v'(\rho_0)^{p-1}=\int_{\rho_0}^{s}t^{n-1}(v^{q-1}(t)-v^{r-1}(t))\, dt. \\
s^{N-1}(-v'(s))^{p-1}\ge s^{N-1}(-v'(s))^{p-1}-\rho_0^{N-1}(-v'(\rho_0))^{p-1}
\stackrel{\eqref{eq:rad:v1}}{=}
\int_{\rho_0}^{s}t^{N-1}(v^{q-1}(t)-v^{r-1}(t))\, dt. \nonumber
\end{align}
for any $s\ge\rho_0$. Since $v(t)\in(0,\delta)$ for $t>\rho_0$, using that $r>q$, we have %$p\le q<r$,
\begin{align}
&s^{N-1}(-v'(s))^{p-1}\ge c\int_{\rho_0}^s t^{N-1}v^{q-1}(t)dt
\ge cv^{q-1}(s)\int_{\rho_0}^s t^{N-1}\, dt\\
&= cv^{q-1}(s)(s^N-\rho_0^n)/N\ge cv^{q-1}(s)s^N\qquad\forall\, s>\rho_1 \nonumber\\
\end{align}
%\raj{
%\begin{align}
 %   s^{N-1}(-v'(s))^{p-1}\ge c\int_{\rho_0}^s t^{N-1}v^{q-1}(t)dt
%\ge %\nonumber\\
%\ge (-1)^{p-1}cv^{r-1}(s)(s^n-\rho_0^n)/n
%\ge (-1)^{p-1}k v(s)^{r-1}s^n\\
%\text{for some}\ s>\rho_0\ \text{with}\ \rho_1\ \text{large enough}.\nonumber
%\end{align}
%}
for some $\rho_1>\rho_0$ large enough. Integrating both side form $\rho_1$ to $s$ and using that $p<q$, we obtain
\begin{equation}
\begin{aligned}
v^{1-\frac{q-1}{p-1}}(s)&=v^{\frac{p-q}{p-1}}(s)\ge v^{\frac{p-q}{p-1}}(s)-v^{\frac{p-q}{p-1}}(\rho_1)=\int_{\rho_1}^s \frac{d}{dt}(v^{\frac{p-q}{p-1}}(t))dt\\
&=-c\int_{\rho_1}^s v^{-\frac{q-1}{p-1}}(t)v'(t)dt\ge\kappa\int_{\rho_1}^s t^{\frac{1}{p-1}}dt=\kappa(p-1)(s^{\frac{p}{p-1}}-\rho_1^{\frac{p}{p-1}})>0,
\end{aligned}
\end{equation}
which is equivalent to
$$v(s)\le \tilde{\kappa}s^{-\frac{p}{q-p}}$$
for $s$ large enough.\\

\end{appendices}
%\begin{thebibliography}
%\bibliography{pdea-sn-rev1}% common bib file
\bibliography{ref}% common bib file
%% if required, the content of .bbl file can be included here once bbl is generated
%\input pdea-sn-rev1.bbl
%\end{thebibliography}
\end{document}